\newtheorem{theorem}{Theorem}
\theoremstyle{plain}
\newtheorem{corollary}{Corollary}
\newtheorem{definition}{Definition}
\newtheorem{example}{Example}
\newtheorem{lemma}{Lemma}
\newtheorem{notation}{Notation}
\newtheorem{proposition}{Proposition}
\newtheorem{properties}{Properties}
\newtheorem{remark}{Remark}
\newcommand{\cyc}{\mbox{cyc}}
\begin{document}

\title[Affine Plane Ratio of Linear Points]{Advances in the Geometry of the Ratio of Linear Points in the Desargues Affine Plane Skew Field}

\author[Orgest ZAKA]{Orgest ZAKA}
\address{Orgest ZAKA: Department of Mathematics-Informatics, Faculty of Economy and Agribusiness, Agricultural University of Tirana, Tirana, Albania}
\email{ozaka@ubt.edu.al, gertizaka@yahoo.com, ozaka@risat.org}

\author[James F. Peters]{James F. Peters}
\address{James F. PETERS: Department of Electrical \& Computer Engineering, University of Manitoba, WPG, MB, R3T 5V6, Canada and Department of Mathematics, Faculty of Arts and Sciences, Ad\.{i}yaman University, 02040 Ad\.{i}yaman, Turkey}
\thanks{The research has been supported by the Natural Sciences \& Engineering Research Council of Canada (NSERC) discovery grant 185986, Instituto Nazionale di Alta Matematica (INdAM) Francesco Severi, Gruppo Nazionale  per le Strutture Algebriche, Geometriche e Loro Applicazioni grant 9 920160 000362, n.prot U 2016/000036 and Scientific and Technological Research Council of Turkey (T\"{U}B\.{I}TAK) Scientific Human Resources Development (BIDEB) under grant no: 2221-1059B211301223.}
\email{James.Peters3@umanitoba.ca}

\dedicatory{Dedicated to Girard Desargues and R.V. Banavar}

\subjclass[2010]{51A30; 51E15, 51N25, 30C20, 30F40}

\begin{abstract}
This paper introduces advances in the geometry of the ratio of either two or three points in a line in the Desargues affine plane, and we see this as a ratio of elements of skew field which are constructed over a line in Desargues affine plane. The results given here have a clean, geometric presentation based Desargues affine plan axiomatics and definitions of addition and multiplication of points on a line in this plane, and for skew field properties.  The results in this paper are: (1) study of properties for ratio of two and three points, in a line on Desargues affine plane. Also, we discuss the cases related to the "line-skew field" characteristic, when it is two and when it is different from two. (2) we have construct the maps for ratio points-set, for two and three points, and have prove that, this maps are bijections of the lines. (3) set of ratio points (for two and for three points) with addition and multiplication of points, forms a skew fields, for more, this skew fields are sub-skew fields of the 'line-skew field' on Desargues affine plane. (4) Every Dyck polygon containing co-linear ratio vertices in the Desargues affine plane has a free group presentation.
\end{abstract}

\keywords{Co-linear, Ratio, Skew-Field, Desargues Affine Plane}

\maketitle

\section{Introduction}

The foundations for the study of the connections between axiomatic geometry and algebraic structures were set forth by D. Hilbert \cite{Hilbert1959geometry}. And some classic for this are,  E. Artin \cite{Artin1957GeometricAlgebra}, D.R. Huges and F.C. Piper ~\cite{HugesPiper}, H. S. M Coxeter ~\cite{CoxterIG1969}. 
Marcel Berger in \cite{Berger2009geometry12}, Robin Hartshorne
 in \cite{Hartshorne1967Foundations}, etc. Even earlier, in we works \cite{ZakaDilauto, ZakaFilipi2016, FilipiZakaJusufi, ZakaCollineations, ZakaVertex, ZakaThesisPhd, ZakaPetersIso, ZakaPetersOrder, ZakaMohammedSF, ZakaMohammedEndo} we have brought up quite a few interesting facts about the association of algebraic structures with affine planes and with ’Desargues affine planes’, and vice versa.

In this article we study 'ratio of 2-points' and 'ratio of 3-points' in a line of the Desargues affine plane. Earlier, we have shown that on each line on Desargues affine plane, we can construct a skew-field simply and constructively, using simple elements of elementary geometry, and only the basic axioms of Desargues affine plane (see \cite{ZakaFilipi2016}, \cite{FilipiZakaJusufi}, \cite{ZakaThesisPhd}, \cite{ZakaPetersIso} ). 
In this paper, we consider dilations and translations entirely in the Desargues affine plane (see \cite{ZakaDilauto}, \cite{ZakaCollineations}, \cite{ZakaThesisPhd}, \cite{ZakaPetersIso}).

The novelty in this paper is that we achieve our results without the use coordinates.  Instead, we see the points of a line as elements of the skew field which is constructed over this line and we make use of properties enjoyed by transformations in Desargues affine planes such as parallel projection, translations and dilations and is proved that the this-transformations preserve the ratio of 2 and 3 points in a line.  In addition, we identify and present every Dyck polygons containing colinear points in the Desargues affine plane as a free group, which make it possible to achieve a concise view of complex geometric constructions.

\section{Preliminaries}
This secion gives a brief introduction to addition and multiplication of co-linear points in the Desargues affine plane and algebraic properties of skew fields.

\subsection{Desargues Affine Plane}$\mbox{}$\\

Let $\mathcal{P}$ be a nonempty space, $\mathcal{L}$ a nonempty subset of $\mathcal{P}$. The elements $p$ of $\mathcal{P}$ are points and an element $\ell$ of $\mathcal{L}$ is a line. 

\begin{definition}
The incidence structure $\mathcal{A}=(\mathcal{P}, \mathcal{L},\mathcal{I})$, called affine plane, where satisfies the above axioms:

\begin{compactenum}[1$^o$]
\item For each points $\left\{P,Q\right\}\in \mathcal{P}$, there is exactly one line $\ell\in \mathcal{L}$ such that $\left\{P,Q\right\}\in \ell$.

\item For each point $P\in \mathcal{P}, \ell\in \mathcal{L}, P \not\in \ell$, there is exactly one line $\ell'\in \mathcal{L}$ such that
$P\in \ell'$ and $\ell\cap \ell' = \emptyset$\ (Playfair Parallel Axiom~\cite{Pickert1973PlayfairAxiom}).   Put another way,
if the point $P\not\in \ell$, then there is a unique line $\ell'$ on $P$ missing $\ell$~\cite{Prazmowska2004DemoMathDesparguesAxiom}.

\item There is a 3-subset of points $\left\{P,Q,R\right\}\in \mathcal{P}$, which is not a subset of any $\ell$ in the plane.   Put another way,
there exist three non-collinear points $\mathcal{P}$~\cite{Prazmowska2004DemoMathDesparguesAxiom}.
\end{compactenum}
\end{definition}

\emph{\bf Desargues' Axiom, circa 1630}~\cite[\S 3.9, pp. 60-61] {Kryftis2015thesis}~\cite{Szmielew1981DesarguesAxiom}.   Let $A,B,C,A',B',C'\in \mathcal{P}$ and let pairwise distinct lines  $\ell^{AA_1} , \ell^{BB'}, \ell^{CC'}, \ell^{AC}, \ell^{A'C'}\in \mathcal{L}$ such that
\begin{align*}
\ell^{AA_1} \parallel \ell^{BB'} \parallel \ell^{CC'} \ \mbox{(Fig.~\ref{fig:DesarguesAxiom}(a))} &\ \mbox{\textbf{or}}\
\ell^{AA_1} \cap \ell^{BB'} \cap \ell^{CC'}=P.
 \mbox{(Fig.~\ref{fig:DesarguesAxiom}(b) )}\\
 \mbox{and}\  \ell^{AB}\parallel \ell^{A'B'}\ &\ \mbox{and}\ \ell^{BC}\parallel \ell^{B'C'}.\\
A,B\in \ell^{AB}, A'B'\in \ell^{A'B'},  &\ \mbox{and}\ B,C\in \ell^{BC},  B'C'\in \ell^{B'C'}.\\
A\neq C, A'\neq C', &\ \mbox{and}\ \ell^{AB}\neq \ell^{A'B'}, \ell^{BC}\neq \ell^{B'C'}.
\end{align*}

\begin{figure}[htbp]
	\centering
		\includegraphics[width=0.85\textwidth]{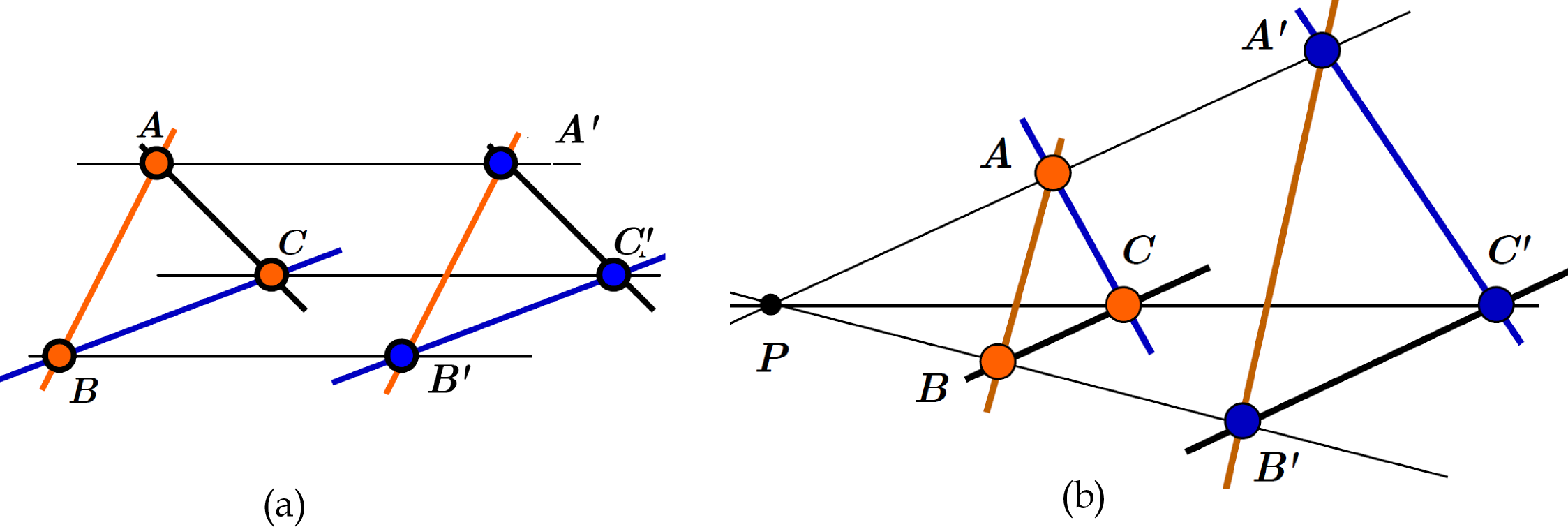}
	\caption{Desargues Axioms: (a) For parallel lines $\ell^{AA'} \parallel \ell^{BB'} \parallel \ell^{CC'}$; (b) For lines which are cutting in a single point $P$,  $\ell^{AA'} \cap \ell^{BB'} \cap \ell^{CC'}=P$.}
	\label{fig:DesarguesAxiom}
\end{figure}

Then $\boldsymbol{\ell^{AC}\parallel \ell^{A'C'}}$.   \qquad \textcolor{blue}{$\blacksquare$}

\begin{example}
In Euclidean plane, three vertexes $ABC$ and $A'B'C'$, are similar (in (a) are equivalent-triangle and in (b) are homothetical-triangle) the parallel lines,  $\ell^{AC}, \ell^{A'C'}\in \mathcal{L}$ in Desargues' Axiom are represented in Fig.~\ref{fig:DesarguesAxiom}.  In other words, the side $AC$ of the triangle of $\bigtriangleup ABC$ is parallel with the side $A'C'$ of the triangle $\bigtriangleup A'B'C'$, provided the restrictions on the points and lines in Desargues' Axiom are satisfied.
\qquad \textcolor{blue}{$\blacksquare$}
\end{example}

\noindent A {\bf Desargues affine plane} is an affine plane that satisfies Desargues' Axiom.

\vspace*{0.3cm}
 
\begin{notation}
Three vertexes $ABC$ and $A'B'C'$, which, fulfilling the conditions of the Desargues Axiom, we call \emph{'Desarguesian'}.
\end{notation}

\subsection{Addition and Multiplication of points in a line of Desargues affine plane}$\mbox{}$\\

\textbf{Addition of points in a line in affine plane}. In an Desargues affine plane $\mathcal{A_D}=(\mathcal{P},\mathcal{L},\mathcal{I})$ we fix two different points $O,I\in \mathcal{P},$ which, according to Axiom 1, determine a line $\ell^{OI}\in \mathcal{L}.$ Let $A$ and $B$ be two arbitrary points of a line $\ell^{OI}$. In plane $\mathcal{A_D}$ we choose a point $B_{1}$ not incident with $\ell^{OI}$: $B_{1}\notin \ell^{OI}$ (we call the auxiliary point). Construct line $\ell_{OI}^{B_{1}},$ which is only according to the Axiom 2. Then construct line $\ell_{OB_{1}}^{A},$ which also is the only according to the Axiom 2. Marking their intersection $P_{1}=\ell_{OI}^{B_{1}}\cap \ell_{OB_{1}}^{A}.$ Finally construct line $\ell_{BB_{1}}^{P_{1}}.$ For as much as $\ell^{BB_{1}}$ cuts the line $\ell^{OI}$ in point $B$, then this line, parallel with $\ell^{BB_{1}}$, cuts the line $\ell^{OI}$ in a single point $C$, this point we called the addition of points $A$ with point $B$ (Figure \ref{fig:FigureAdMult} (a)).

\textbf{Multiplication of points in a line in affine plane}.
Choose in the plane $\mathcal{A_D}$ one point $B_{1}$ not incident with lines $\ell^{OI},$ and construct the line $\ell^{IB_{1}}$. Construct the line $\ell_{IB_{1}}^{A},$ which is the only accoding to the Axiom 2 and cutting the line $\ell^{OB_{1}}$. Marking their intersection with $P_{1}=\ell
_{IB_{1}}^{A}\cap OB_{1}.$ Finally, construct the line $\ell
_{BB_{1}}^{P_{1}}.$ Since $\ell^{BB_{1}}$ cuts the line $\ell^{OI}$ in a single point $B$, then this line, parallel with $\ell^{BB_{1}}$, cuts the line $\ell^{OI}$ in one single point $C$, this point we called the multiplication of points $A$ with point $B$ (Figure \ref{fig:FigureAdMult} (b)).

\begin{figure}[htbp]
\centering%
\includegraphics[width=0.92\textwidth]{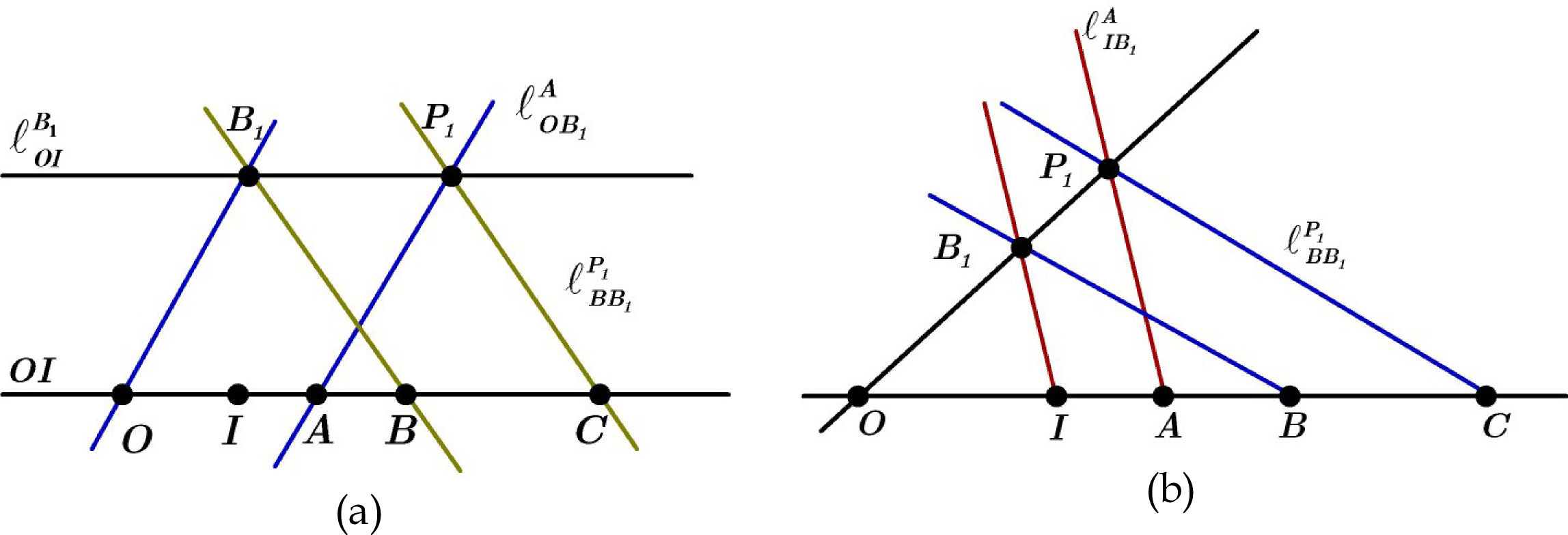}
\caption{ (a) Addition of points in a line in affine plane, 
(b) Multiplication of points in a line in affine plane}
\label{fig:FigureAdMult}
\end{figure}

The process of construct the points $C$ for adition and multiplication of points in $\ell^{OI}-$line in affine plane, is presented in the tow algorithm form  

\begin{multicols}{2}
\textsc{Addition Algorithm}
\begin{description}
	\item[Step.1] $B_{1}\notin \ell^{OI}$
	\item[Step.2] $\ell_{OI}^{B_{1}}\cap \ell_{OB_{1}}^{A}=P_{1}$
	\item[Step.3] $\ell_{BB_{1}}^{P_{1}}\cap \ell^{OI}=C(=A+B)$
\end{description}

\textsc{Multiplication Algorithm}
\begin{description}
	\item[Step.1] $B_{1}\notin \ell^{OI}$
	\item[Step.2] $\ell_{IB_{1}}^{A}\cap \ell^{OB_{1}}=P_{1}$
	\item[Step.3] $\ell_{BB_{1}}^{P_{1}}\cap \ell^{OI}=C(=A\cdot B)$
\end{description}
\end{multicols}

In \cite{ZakaThesisPhd} and \cite{FilipiZakaJusufi}, we have prove that $(\ell^{OI}, +, \cdot)$ is a skew field in Desargues affine plane, and is field (commutative skew field) in the Papus affine plane.

\subsection{Some algebraic properties of Skew Fields} $\mbox{}$\\

I n this section $K$ will denote a skew field~\cite{Herstein1968NR} and $z[K]$ its center, where is the set $K$ such that
\[
z[K]=\left\{k \in K \quad |\quad ak=ka, \quad \forall a \in K \right\}
\]
\begin{proposition}
$z[K]$ is a commutative subfield of a skew field $K$.
\end{proposition}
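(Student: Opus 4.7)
The plan is to verify the standard subfield criteria one at a time, using only the definition of $z[K]$ as the set of elements that commute with every element of $K$. Since the problem is to check that $z[K]$ is a (commutative) sub-skew-field, I would proceed by checking closure under each ring operation together with closure under additive and multiplicative inverses, and finish with commutativity.

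First I would note that $0, 1 \in z[K]$ trivially: $0 \cdot a = a \cdot 0 = 0$ and $1 \cdot a = a \cdot 1 = a$ for every $a \in K$. Next, for closure under addition and subtraction, take $x, y \in z[K]$ and an arbitrary $a \in K$; then
\[
(x+y)a = xa + ya = ax + ay = a(x+y),
\]
and similarly $(-x)a = -(xa) = -(ax) = a(-x)$, so $x+y, -x \in z[K]$. For closure under multiplication, compute
\[
(xy)a = x(ya) = x(ay) = (xa)y = (ax)y = a(xy),
\]
giving $xy \in z[K]$.

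The only step with any content is closure under multiplicative inverse. For $x \in z[K]$ with $x \neq 0$, the defining equation $xa = ax$ for arbitrary $a \in K$ gives, after left- and right-multiplying by $x^{-1}$,
\[
x^{-1}(xa)x^{-1} = x^{-1}(ax)x^{-1},
\]
i.e.\ $ax^{-1} = x^{-1}a$, so $x^{-1} \in z[K]$. This is the key observation, but it is hardly an obstacle; the associativity of $\cdot$ in $K$ does all the work.

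Finally, commutativity of $z[K]$ is immediate: if $x, y \in z[K]$, then $y \in K$ and $x$ commutes with every element of $K$, so $xy = yx$. Together with the checks above, $z[K]$ is closed under the two operations and under inverses, contains $0$ and $1$, and its multiplication is commutative; hence $z[K]$ is a commutative subfield of $K$. I expect no serious obstacles here -- the proposition is essentially a bookkeeping exercise, with the inverse step being the only point where one must be careful to use associativity rather than commutativity in $K$.
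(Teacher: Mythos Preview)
Your proof is correct and follows essentially the same route as the paper: verify $0,1\in z[K]$, then check closure under subtraction, multiplication, and multiplicative inverses. The only minor difference is in the inverse step: the paper uses the identity $(AB)^{-1}=B^{-1}A^{-1}$ together with $AB=BA$ to obtain $B^{-1}A^{-1}=A^{-1}B^{-1}$ (and then implicitly that every nonzero element is some $B^{-1}$), whereas you multiply $xa=ax$ on both sides by $x^{-1}$ directly---your version is slightly cleaner but the content is the same.
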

\begin{proof}
Show first, that $O, I \in z[K]$, because $O\cdot X=X\cdot O$ and $I\cdot X=X\cdot I$, $\forall X\in K$ (for all point $X\in \ell^{OI}-$line), so $z[K]$ is non-empty.

If $A,B \in z[K]$ and $C\in K$, have
\[ (A-B)C=AC-BC=CA-CB=C(A-B), \]
then 
\[\forall A,B \in z[K] \Rightarrow A-B \in z[K]. \]
let's see now, 
\[ (AB)C=A(BC)=A(CB)=(AC)B=(CA)B=C(AB),\]
then, we have  
\[\forall A,B \in z[K] \Rightarrow AB \in z[K]. \]
If $A \in z[K]$, and $A \neq O$, then $A^{-1} \in z[K]$, truly,
is clear that $(A^{-1})^{-1}=A$, and let $B\in K$, then
\[B^{-1}A^{-1}=(AB)^{-1}=(BA)^{-1}=A^{-1}B^{-1}, \]
thus, for $A \in z[K]$, and $A \neq O$, and $\forall B\in z[K]$, we have that,
\[B^{-1}A^{-1}=A^{-1}B^{-1} \Rightarrow  A^{-1} \in z[K] \]
Thus, $z[K]$ is a commutative subring of $K$, then is a subfield of skew field $K$.
\end{proof}
Let now $p\in K$ be a fixed element of the skew field $K$. We will denote $z_K(p)$ the centralizer in $K$ of the element $p$, where is the set,
\[
z_K(p)=\left\{k \in K | pk=kp, \right\}.
\]
$z_K(p)$ is sub skew field of K, but, in general, it is not commutative.

Let $K$ be a skew field, $p\in K$, and let us denote by $[p_K]$ the conjugacy class of $p$:
\[
[p_K]= \left\{q^{-1}pq \quad|\quad q \in K \setminus \{0\} \right\}
\]
If, $p\in z[K]$, for all $q \in K$ we have that $q^{-1}pq=p$

\section{Ratio of two and three Points in a line of Desargues affine plane}

\bigskip
In this section, we will give the main result in this article.
We will study the 'ration of two points' and 'ratio of three points' on a line in Desargues affine planes, we utilize a method that is naive and direct, in the abstract and axiomatic sense of it, without requiring the concept of coordinates, and metrics. We will defined the 'ratio of two points' and 'ratio of three points' as a point in the Desargues affine plane.  For achieve our results, we do not use coordinates, and follow simple techniques, of elementary geometry, for proofs.  We are mainly based on our previous work, on the close connection we found between the lines, in Desargues affine planes and skew fields. 

Mainly, we rely on our results regarding the addition and multiplication of points on a line of the Desargues affine plane, and the fact that a line (set of points), with addition and multiplication, forms a skew field (for more, see \cite{ZakaThesisPhd}, \cite{ZakaFilipi2016}, \cite{FilipiZakaJusufi}, \cite{ZakaVertex}, \cite{ZakaCollineations}, \cite{ZakaDilauto},  \cite{ZakaPetersIso}, \cite{ZakaPetersOrder}).

We will select some transformations (some of them we are defining ourselves) which are invariant regarding the ratio of points. 
As well on the properties enjoyed by transformations in Desargues affine planes, like, parallel projection, translations and dilations (see \cite{ZakaCollineations}, \cite{ZakaDilauto},  \cite{ZakaPetersIso}), we prove that this transformations preserving 'ratio of two points' and 'ratio of three points' in a line on a Desargues affine plane. We use skew fields properties, for our proofs.

\subsection{Ratio of 2-Points in a line of Desargues affine plane}$\mbox{}$\\

In this section we will study the 'ration' of 2-points on a line in Desargues affine planes. We will define 'ratio' as a point in the Desargues affine plane. 

\begin{definition} \label{ratio2points}
Lets have two different points $A,B \in \ell^{OI}-$line, and $B\neq O$, in Desargues affine plane. We define as ratio of this tow points, a point $R\in \ell^{OI}$, such that,
\[R=B^{-1}A
\] 
we mark this, with,
\[
R=r(A:B)=B^{-1}A
\]
\end{definition}

For a 'ratio-point' $R \in \ell^{OI}$, and for point $B\neq O$ in line $\ell^{OI}$, is a unique defined point, $A \in \ell^{OI}$, such that $R=B^{-1}A=r(A:B)$.
 
\begin{figure}[htbp]
	\centering
		\includegraphics[width=0.8\textwidth]{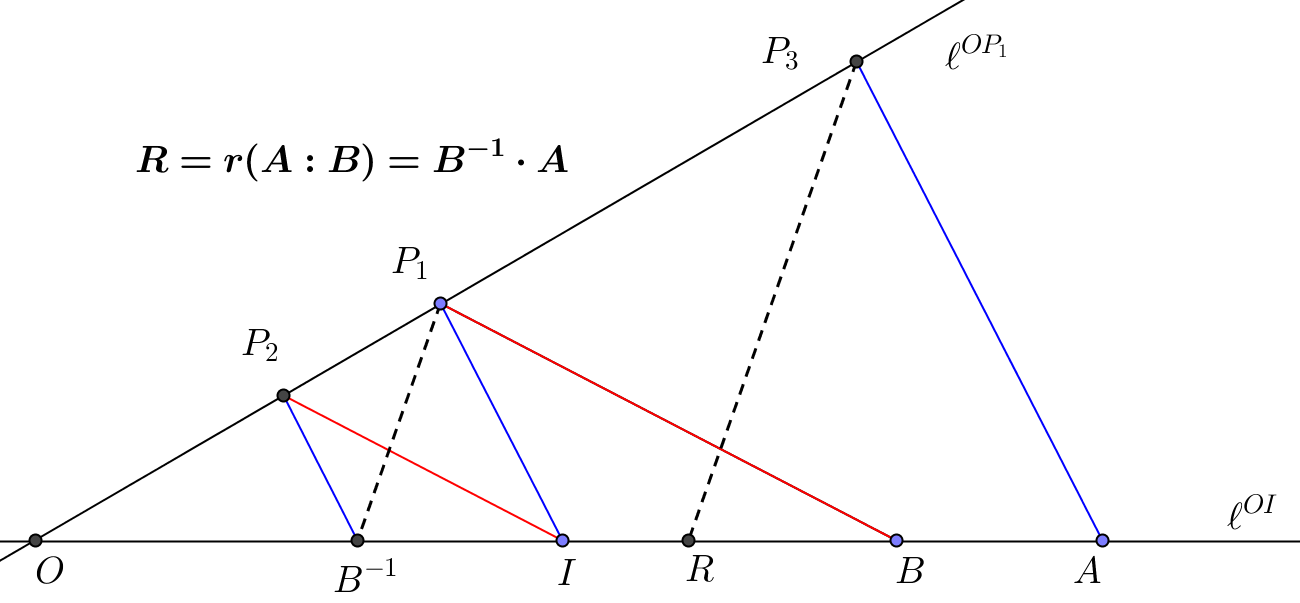}
		\caption{Ilustrate the Ratio-Point, of 2-Points in a line of Desargues affine plane $R=r(A:B)=B^{-1}A$.}
		\label{Ratio2points}
\end{figure}

\begin{remark}
Our caution is maximum, since the points of a line in Desargues affine planes are also elements of skew-fields built on it, and the commutative property for multiplication does not hold! For this reason we have
\[ B^{-1}\cdot A \neq A\cdot B^{-1}.\]
However, if we had defined the ratio of 2-points as,
\[r(A:B)=A\cdot B^{-1}, \]
the results would be similar. However, we are keeping our definition!
\end{remark}

\begin{properties}\label{properties:2points}
From Def.\ref{ratio2points}, we derive a number of properties related to the position of the points $A,B$ in $\ell^{OI}-$line in the Desargues affine plane.

\begin{itemize}
  \item If $A=B\neq O$, then $r(A:B)=r(A:A)=A^{-1}\cdot A=I$.
	\item If $A=O$ and $B\neq O$, then $r(A:B)=r(O:B)=B^{-1}\cdot O=O$.
	\item If $A=I$ and $B\neq O$, then $r(A:B)=r(O:I)=B^{-1}\cdot I=I$.
	\item If $A=I$ and $B\neq I$, then $r(A:B)=r(I:I)=I^{-1}\cdot I=I$.
	\item If $A\neq O$ and $B=O$, then $r(A:B)=r(A:O)=O^{-1}\cdot A=\infty$ (infinite point).
\end{itemize}
\end{properties}

\begin{theorem}
If have two different points $A,B \in \ell^{OI}-$line, and $B\neq O$, in Desargues affine plane, then,
\[
r^{-1}(A:B)=r(B:A)
\]
\end{theorem}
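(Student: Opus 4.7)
The plan is to reduce the identity $r^{-1}(A:B) = r(B:A)$ to a direct computation inside the skew field $(\ell^{OI}, +, \cdot)$ established in \cite{ZakaThesisPhd,FilipiZakaJusufi}, using the standard reversal-of-inverses rule that holds in every skew field. First I would unpack both sides using Definition~\ref{ratio2points}: the left-hand side is $r^{-1}(A:B) = (B^{-1}\cdot A)^{-1}$, and the right-hand side is $r(B:A) = A^{-1}\cdot B$, so the theorem reduces to proving the single equality $(B^{-1}\cdot A)^{-1} = A^{-1}\cdot B$ in the line-skew-field.

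Before performing the computation I would first make the hypotheses precise. Since $r^{-1}(A:B)$ must make sense, the point $r(A:B) = B^{-1}A$ has to be invertible in $\ell^{OI}$, i.e. distinct from $O$; by Properties~\ref{properties:2points} this forces $A\neq O$ in addition to the assumed $B\neq O$, so both $A$ and $B$ lie in the multiplicative group $\ell^{OI}\setminus\{O\}$ of the skew field. This simultaneously guarantees that $r(B:A)=A^{-1}B$ is a well-defined point of $\ell^{OI}$.

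The computation itself is then a routine application of the skew-field axioms: in any skew field $(xy)^{-1}=y^{-1}x^{-1}$ for nonzero $x,y$, and $(x^{-1})^{-1}=x$. Applying these to $x=B^{-1}$ and $y=A$ gives
\begin{equation*}
r^{-1}(A:B) \;=\; (B^{-1}\cdot A)^{-1} \;=\; A^{-1}\cdot (B^{-1})^{-1} \;=\; A^{-1}\cdot B \;=\; r(B:A),
\end{equation*}
which is exactly what is claimed.

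There is essentially no obstacle here beyond bookkeeping: the only subtlety worth flagging is the non-commutativity of multiplication in $\ell^{OI}$, which is precisely why the factors swap order when the inverse is distributed, and which is the reason the definition $r(A:B)=B^{-1}A$ (rather than $AB^{-1}$) was insisted upon in the preceding remark. I would therefore close the proof with an explicit note that the reversal of factors in $(B^{-1}A)^{-1}=A^{-1}B$ is consistent with, and in fact explains, why $r(A:B)$ and $r(B:A)$ are genuine inverses of one another in the multiplicative group of the line-skew-field.
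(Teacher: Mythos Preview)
Your proposal is correct and follows essentially the same approach as the paper: both arguments unpack $r^{-1}(A:B)$ via Definition~\ref{ratio2points} and then apply the skew-field identities $(xy)^{-1}=y^{-1}x^{-1}$ and $(x^{-1})^{-1}=x$ to obtain $A^{-1}B=r(B:A)$. Your version is slightly more careful in that you explicitly justify why $A\neq O$ is implicitly required (so that $r(A:B)$ is invertible and $r(B:A)$ is defined), a point the paper's proof leaves tacit.
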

\proof
From definition \ref{ratio2points}, we have that $r(A:B)=B^{-1}A$, we appreciate,
\[
\begin{aligned}
r^{-1}(A:B)&=[r(B:A)]^{-1}\\
	&= [B^{-1}A]^{-1}\\
	&= A^{-1}[B^{-1}]^{-1}	\\
	&= A^{-1}B \quad \text{(in skew fields $(ab)^{-1}=b^{-1}a^{-1}, \forall a,b$)} \\
&=r(B:A)
\end{aligned}
\]
\qed

\begin{theorem}
For three collinear point $A,B,C$ and $C\neq O$, in $\ell^{OI}-$line, have,
\[
r(A+B:C)=r(A:C)+r(B:C)
\]
\end{theorem}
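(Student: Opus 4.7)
The plan is to reduce the claimed identity to the left distributivity of multiplication over addition in the skew field $(\ell^{OI}, +, \cdot)$, which was established in the authors' earlier work (\cite{ZakaThesisPhd}, \cite{FilipiZakaJusufi}) and may be invoked here as a background fact. Since $C \neq O$ the element $C$ is invertible in this skew field, so $C^{-1}$ makes sense as a point of $\ell^{OI}$, and this is the only hypothesis needed.

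First I would apply Definition~\ref{ratio2points} directly to the left-hand side to rewrite
\[
r(A+B:C) = C^{-1}\cdot (A+B).
\]
Next I would apply left distributivity in the skew field $(\ell^{OI},+,\cdot)$ to expand
\[
C^{-1}\cdot (A+B) = C^{-1}\cdot A + C^{-1}\cdot B.
\]
Finally, invoking Definition~\ref{ratio2points} once more on each summand gives $C^{-1}\cdot A = r(A:C)$ and $C^{-1}\cdot B = r(B:C)$, from which the identity follows by chaining the three equalities.

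There is essentially no obstacle here beyond making sure that the left-distributive law (as opposed to the right-distributive one) is the one used, a point worth emphasizing because commutativity of multiplication is not available in the Desargues affine plane setting. In particular, had the ratio been defined as $r(A:B) = A\cdot B^{-1}$ (as noted in the Remark following Definition~\ref{ratio2points}), one would instead appeal to right distributivity; either way, one and only one of the two distributive laws is required, and both hold in every skew field. Thus the proof is genuinely a one-line manipulation once the definition is unfolded, and no additional incidence-geometric construction is needed.
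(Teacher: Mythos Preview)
Your proof is correct and matches the paper's own argument essentially line for line: unfold Definition~\ref{ratio2points}, apply left distributivity in the skew field $(\ell^{OI},+,\cdot)$, and re-identify the summands as $r(A:C)$ and $r(B:C)$. Your additional remark about left versus right distributivity is a welcome clarification that the paper does not make explicit.
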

\proof

From definition \ref{ratio2points}, we have that $r(A:B)=B^{-1}A$, we appreciate,
\[
\begin{aligned}
r(A+B:C)&=C^{-1}(A+B)\\
	&= C^{-1}A+C^{-1}B) \\
	&\text{(from distribution property of skew fields $K=(\ell^{OI},+,\cdot)$)}\\
	&= r(A:C)+r(B:C)
\end{aligned}
\]
\qed

\begin{theorem}
For three collinear point $A,B,C$ and $C\neq O$, in $\ell^{OI}-$line, have,
\begin{enumerate}
	\item $r(A\cdot B:C)=r(A:C)\cdot B.$
	\item $r(A:B\cdot C)=C^{-1}r(A:C).$
\end{enumerate}
\[
r(A\cdot B:C)=r(A:C)\cdot B
\]
\end{theorem}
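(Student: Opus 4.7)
The plan is to unfold both identities directly via the definition $r(X:Y)=Y^{-1}X$ and then apply the two basic skew field identities already in use in the paper: associativity of the multiplication on $\ell^{OI}$, and the inversion rule $(ab)^{-1}=b^{-1}a^{-1}$, both of which hold because $(\ell^{OI},+,\cdot)$ is a skew field by the results of \cite{ZakaThesisPhd,FilipiZakaJusufi}. Before starting I would note the standing hypotheses: for the ratios to be defined we need $C\neq O$ in (1), and $BC\neq O$ in (2); since a skew field has no zero divisors, the second reduces to $B\neq O$ and $C\neq O$.

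For (1), I would compute in one line
\[
r(A\cdot B:C)\;=\;C^{-1}(A\cdot B)\;=\;(C^{-1}A)\cdot B\;=\;r(A:C)\cdot B,
\]
where the middle equality is associativity of $\cdot$ in the line-skew field, and the last equality is just the definition applied in reverse.

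For (2), the natural computation is
\[
r(A:B\cdot C)\;=\;(B\cdot C)^{-1}A\;=\;C^{-1}B^{-1}A\;=\;C^{-1}(B^{-1}A)\;=\;C^{-1}\,r(A:B),
\]
using $(BC)^{-1}=C^{-1}B^{-1}$ (exactly the identity invoked in the previous theorem) and then associativity. Note that this yields $C^{-1}\,r(A:B)$, whereas the statement as printed has $C^{-1}\,r(A:C)$ on the right-hand side; since the latter would force $B^{-1}=C^{-1}$, i.e.\ $B=C$, the printed formula looks like a typographical slip for $r(A:B)$. I would prove the corrected identity and flag the typo in a brief remark.

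There is no serious obstacle here: the only thing to be careful about is the order of the factors, because the skew field $(\ell^{OI},+,\cdot)$ is not assumed commutative, so one must never silently swap $B$ past $C^{-1}$ or $B^{-1}$ past $A$. Once the definition is applied and the non-commutative inversion rule is used, both parts collapse to a chain of three equalities each.
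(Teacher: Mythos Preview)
Your proof is correct and follows exactly the paper's approach: unfold the definition $r(X:Y)=Y^{-1}X$, then use associativity for (1) and the inversion rule $(BC)^{-1}=C^{-1}B^{-1}$ plus associativity for (2). Your observation about the typo is apt---the paper's own computation for (2) also arrives at $C^{-1}(B^{-1}A)$ and then mislabels it as $C^{-1}r(A:C)$ rather than $C^{-1}r(A:B)$.
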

\proof
(1) From definition \ref{ratio2points}, we have that $r(A:B)=B^{-1}A$, we appreciate,
\[
\begin{aligned}
r(A\cdot B:C)&=C^{-1}(A\cdot B)\\
	&=(C^{-1}A)\cdot B  \\
	&\text{(from associative property of skew fields $K=(\ell^{OI},+,\cdot)$)}\\
	&= r(A:C)\cdot B.
\end{aligned}
\]
And for (2),
\[
\begin{aligned}
r(A:B\cdot C)&=[BC]^{-1}A\\
	&=(C^{-1}B^{-1})\cdot A  \\
	&=C^{-1}(B^{-1}\cdot A)   \\
	&\text{(from associative property of skew fields $K=(\ell^{OI},+,\cdot)$)}\\
	&=C^{-1}r(A:C). 
\end{aligned}
\]
\qed

\begin{theorem}
Let's have the points $A,B \in \ell^{OI}-$line where $B\neq O$.  Then have that,
\[
r(A:B)=r(B:A) \Leftrightarrow A=B.
\]
\end{theorem}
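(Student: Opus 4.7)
The plan is to prove each direction separately, treating the hypothesis as implicitly requiring $A\neq O$ too (otherwise $r(B:A)=O^{-1}B$ is not a finite point of $\ell^{OI}$, by Properties \ref{properties:2points}).

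Forward direction ($\Leftarrow$): Assume $A=B$. Since $B\neq O$, we have $A\neq O$, and from Properties \ref{properties:2points} (or directly from Definition \ref{ratio2points}) both $r(A:B)=A^{-1}A=I$ and $r(B:A)=A^{-1}A=I$, so the ratios coincide.

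Backward direction ($\Rightarrow$): Assume $r(A:B)=r(B:A)$, i.e.\ $B^{-1}A=A^{-1}B$. Set $X:=B^{-1}A\in\ell^{OI}$. The key move is to square $X$ using the hypothesis on the second factor:
\[
X^{2} \;=\; (B^{-1}A)(A^{-1}B) \;=\; B^{-1}(AA^{-1})B \;=\; B^{-1}B \;=\; I,
\]
so $X$ is an involution in the line-skew field $K=(\ell^{OI},+,\cdot)$. Because $K$ is a skew field, it has no zero divisors, so from $(X-I)(X+I)=X^{2}-I=O$ we conclude $X=I$ or $X=-I$; equivalently, $A=B$ or $A=-B$.

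The main obstacle is the spurious branch $A=-B$. In a line-skew field of characteristic two one has $-B=B$, so both branches collapse to $A=B$ and the biconditional holds exactly as stated; this matches the paper's announced plan (see the abstract) to split results according to whether the line-skew field has characteristic two or not. In characteristic different from two an extra hypothesis (or a reading of ``ratio'' that identifies $A$ with $-A$) is needed to discard $A=-B$; I would flag this and, if the global convention of the section is $\mathrm{char}\, K=2$, simply cite it at the end of the argument to finish $A=B$.
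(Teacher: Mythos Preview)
Your approach is essentially the same as the paper's: both reduce the hypothesis $B^{-1}A=A^{-1}B$ to the statement that $X:=B^{-1}A$ satisfies $X^{2}=I$, and then ask what elements of the line skew field satisfy this. The difference is that you carry the argument through correctly, while the paper does not. The paper writes that an element with $a^{-1}=a$ (hence $a^{2}=I$) is ``idempotent'' and then invokes the fact that the only idempotents of a skew field are $O$ and $I$; this is a terminological slip, since idempotent means $a^{2}=a$, not $a^{2}=I$. The correct consequence of $X^{2}=I$ is exactly what you obtain: $(X-I)(X+I)=O$, so $X=I$ or $X=-I$, i.e.\ $A=B$ or $A=-B$.

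Your flag on the branch $A=-B$ is therefore not excess caution but a genuine correction: in characteristic $\neq 2$ the biconditional as stated is false (take $A=-B\neq O$; then $r(A:B)=r(B:A)=-I$ while $A\neq B$). The paper's proof simply loses this case through the idempotent/involution confusion. Your handling of the characteristic-$2$ case and your remark that some extra convention is needed otherwise are both on target.
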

\proof
Suppose that $r(A:B)=r(B:A)$, so $B^{-1}A=A^{-1}B$, but $B^{-1}A=[A^{-1}B]^{-1}$, then we have,
\[
[A^{-1}B]^{-1}=A^{-1}B.
\]
But points, $A,B, A^{-1}, B^{-1}, A^{-1}B, B^{-1}A \in \ell^{OI}$, and $(\ell^{OI}, +, \cdot)$ is skew field. From skew field properties: If $a^{-1}=a \Rightarrow a^2=1$, then element $a$ is \emph{idempotent} in skew field, but, but, the idempotent elements in the skew field are alone $0$ and $1$ ({see, {\em e.g.},}~\cite{Herstein1968NR}). So to us, we have that
\[
A^{-1}B-\text{Idempotent element of skew field $(\ell^{OI}, +, \cdot)$,}
\]
hence,
\[
A^{-1}B=O\quad \text{or}\quad A^{-1}B=I
\]
so,
\[
A^{-1}B=O \Rightarrow B=O \quad \text{which contradicts}
\]
then, we have that,
\[
 A^{-1}B=I  \Rightarrow B=A.
\]
\qed

We can say, now, that all two points in $\ell^{OI}-$line produce a point in it, which we have called 'ratio of two points'. So, we can construct the map:
\[
r_{B}(X): \ell^{OI} \to \ell^{OI},
\]
such that,
\[
r_{B}(X)=r(X:B)=B^{-1}X, \forall X \in \ell^{OI}
\]

\begin{theorem}
This ratio-map, $r_{B}: \ell^{OI} \to \ell^{OI}$ is a bijection in $\ell^{OI}-$line in Desargues affine plane. 
\end{theorem}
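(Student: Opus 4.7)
The plan is to exploit the skew field structure on $\ell^{OI}$ together with the hypothesis $B \neq O$, which guarantees that $B$ is invertible in $(\ell^{OI}, +, \cdot)$. Since $r_B$ is nothing more than left multiplication by the fixed nonzero element $B^{-1}$, bijectivity will follow from the standard fact that left multiplication by a unit in a (skew) ring is a bijection. There is no geometric obstacle here, so the proof reduces to verifying injectivity and surjectivity from the skew field axioms already established in the earlier works cited.

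For injectivity, I would suppose $r_B(X_1) = r_B(X_2)$ for $X_1, X_2 \in \ell^{OI}$. By Definition \ref{ratio2points} this reads $B^{-1}X_1 = B^{-1}X_2$. Left-multiplying both sides by $B$ and using associativity together with $B \cdot B^{-1} = I$ and $I \cdot X = X$ (properties of the skew field $(\ell^{OI}, +, \cdot)$), one obtains $X_1 = X_2$.

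For surjectivity, fix an arbitrary $Y \in \ell^{OI}$ and produce a preimage explicitly. Set $X := B \cdot Y \in \ell^{OI}$, which makes sense because $\ell^{OI}$ is closed under multiplication. Then
\[
r_B(X) = B^{-1} \cdot X = B^{-1} \cdot (B \cdot Y) = (B^{-1} \cdot B) \cdot Y = I \cdot Y = Y,
\]
so every $Y$ lies in the image of $r_B$.

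The main (and only) subtle point is the appeal to the inverse $B^{-1}$, which exists precisely because of the blanket assumption $B \neq O$ in Definition \ref{ratio2points}; without this restriction the map would not even be defined. No commutativity is required, so the argument goes through in the genuinely noncommutative skew field setting of the Desargues (but not Pappus) affine plane. I would close by noting that, geometrically, this bijection is the left-translation-by-$B^{-1}$ map on the line $\ell^{OI}$ induced by the skew field multiplication, consistent with the transformation-theoretic viewpoint emphasized in the paper.
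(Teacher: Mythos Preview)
Your proof is correct and follows essentially the same approach as the paper: both establish injectivity by cancelling the left factor $B^{-1}$ (you multiply on the left by $B$, while the paper subtracts and invokes the absence of zero divisors---equivalent moves in a skew field), and both prove surjectivity by exhibiting the explicit preimage $X = B\cdot Y$. No substantive difference.
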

\proof
\textbf{Injective:} If have,
\[r_{B}(X)=r_{B}(Y)\Leftrightarrow B^{-1}X=B^{-1}Y
\]
then
\[
B^{-1}X-B^{-1}Y=O \Leftrightarrow B^{-1}(X-Y)=O.
\]

Let's remember that these are the points of the skew fields $K$, and the skew field has no 'divisor of zero', for this reason we have,
\[
B^{-1}(X-Y)=O\Leftrightarrow B^{-1}=O\quad \text{or}\quad X-Y=O,
\]
but $B^{-1}\neq O$, then have that $X-Y=O \Leftrightarrow X=Y.$

\textbf{Surjective:} $\forall R\in \ell^{OI}, \exists X \in \ell^{OI}$, such that $r_B(X)=R$. \\
We know that, $r_B(X)=B^{-1}X=R\Leftrightarrow X=B\cdot R$, so,
\[
\forall R\in \ell^{OI}, \exists X=B\cdot R \in \ell^{OI}, \quad\text{i.e.,}\quad r_B(X)=r_B(B\cdot R)=B^{-1}(BR)=(B^{-1}B)R=R.
\]
\qed

For this map we have, $r_{B}(B)=r(B:B)=B^{-1}B=I$, $r_{B}(O)=r(O:B)=B^{-1}O=I$ and  $r_{B}(I)=r(I:B)=B^{-1}I=B^{-1}$, and for all point $X \in \ell^{OI}$ have $r_{B}(X)=r(X:B)$, which is the ratio of two points $X$ and $B$.

The map $r_{B}(X)=r(X:B)$ is a point in $\ell^{OI}$ line, for this reason, addition of points (our algorithm for collecting points on a line of Desargues affine plane), which we used earlier, is valid. We see that the set,
\[
\mathcal{R}_2=\{r_{B}(X)|\forall X\in \ell^{OI}, \}
\]
It's clear that $\mathcal{R}_2 \subseteq \ell^{OI},$ and $\mathcal{R}_2 \neq \emptyset$, below we will prove that this set, is a sub-skew field of $K=(\ell^{OI},+, \cdot )$.  Let's see now the following theorems,
\begin{theorem}\label{thm.01}
Let's have the point $B$ different for point $O$, in  $\ell^{OI}-$line. The ratio-maps-set $\mathcal{R}_2=\{r_{B}(X)|\forall X\in \ell^{OI} \}$ forms a \emph{Abelian} (commutative) group with addition of points in $\ell^{OI}-$line.
\end{theorem}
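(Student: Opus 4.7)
The plan is to verify the abelian group axioms (closure, associativity, commutativity, identity, inverse) for $\mathcal{R}_2$ under the addition of points inherited from $\ell^{OI}$, leveraging the fact that $K=(\ell^{OI},+,\cdot)$ is already a skew field and that left multiplication by $B^{-1}$ distributes over addition.

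First I would establish closure, which is the only substantive step: given two arbitrary elements $r_B(X),r_B(Y)\in\mathcal{R}_2$, I compute
\[
r_B(X)+r_B(Y)=B^{-1}X+B^{-1}Y=B^{-1}(X+Y)=r_B(X+Y)\in\mathcal{R}_2,
\]
where the middle equality is left distributivity in $K$. This identity also reveals the conceptual content of the theorem, namely that $r_B$ is a homomorphism from $(\ell^{OI},+)$ to itself.

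Then associativity and commutativity on $\mathcal{R}_2$ descend immediately from the corresponding properties of the additive group of the skew field $K$, since the operation on $\mathcal{R}_2$ is literally the restriction of the ambient addition on $\ell^{OI}$. For the neutral element I invoke Properties~\ref{properties:2points} to see that $r_B(O)=B^{-1}O=O\in\mathcal{R}_2$, and $r_B(X)+O=r_B(X)$. For additive inverses, given $r_B(X)=B^{-1}X$ I exhibit $r_B(-X)=B^{-1}(-X)=-B^{-1}X\in\mathcal{R}_2$, again by left distributivity applied to $X+(-X)=O$.

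I do not anticipate any real obstacle: the entire argument hinges on the single identity $B^{-1}(X+Y)=B^{-1}X+B^{-1}Y$, which is part of the skew-field axioms already established for $(\ell^{OI},+,\cdot)$ in \cite{ZakaThesisPhd, FilipiZakaJusufi}. Indeed, since the preceding theorem shows that $r_B\colon\ell^{OI}\to\ell^{OI}$ is a bijection, as a set $\mathcal{R}_2$ coincides with $\ell^{OI}$, so the claim can be viewed as a restatement of the abelian group structure underlying $K$; the direct verification above makes transparent why $r_B$ transports this structure onto $\mathcal{R}_2$.
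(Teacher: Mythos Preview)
Your proposal is correct and follows essentially the same route as the paper: both verify the abelian-group axioms directly, using left distributivity $B^{-1}X+B^{-1}Y=B^{-1}(X+Y)$ in the skew field $(\ell^{OI},+,\cdot)$ to reduce each axiom to the corresponding property of $(\ell^{OI},+)$, and both identify the identity as $r_B(O)$ and the inverse of $r_B(X)$ as $r_B(-X)$. Your version is slightly sharper in two places: you explicitly check closure (which the paper leaves implicit), and your final remark that the bijectivity of $r_B$ forces $\mathcal{R}_2=\ell^{OI}$ as a set---so the statement is really just $(\ell^{OI},+)$ being abelian---is a clean conceptual shortcut that the paper does not make.
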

\proof
\textbf{1.} (Associativity) $\forall X,Y,Z \in \ell^{OI}$, we have that:
\[
\begin{aligned}
\left[r_{B}(X)+r_{B}(Y)\right]+r_{B}(Z)&=[B^{-1}X+B^{-1}Y]+B^{-1}Z \\
&= B^{-1}[X+Y]+B^{-1}Z \\
&= B^{-1}[(X+Y)+Z] \\
&\text{(from asociatiove properties of addition groups $(\ell^{OI},+)$.)}\\
&= B^{-1}[X+(Y+Z)] \\
&=B^{-1}X+B^{-1}(X+Y) \\
&=B^{-1}X+[B^{-1}Y+B^{-1}Z]\\
&=r_{B}(X)+[r_{B}(Y)+r_{B}(Z)],
\end{aligned}
\]
so,
\[
[r_{B}(X)+r_{B}(Y)]+r_{B}(Z)=r_{B}(X)+[r_{B}(Y)+r_{B}(Z)], \forall X,Y,Z \in \ell^{OI}.
\]
\textbf{2.} (Commuativity) $\forall X,Y \in \ell^{OI}$, we have that:
\[
\begin{aligned}
r_{B}(X)+r_{B}(Y)&=B^{-1}X+B^{-1}Y\\
&= B^{-1}[X+Y] \\
&\text{(from commutative property of addition groups $(\ell^{OI},+)$.)}\\
&= B^{-1}[Y+X] \\
&=B^{-1}Y+B^{-1}X \\
&=r_{B}(Y)+r_{B}(X),
\end{aligned}
\]
so
\[  r_{B}(X)+r_{B}(Y)=r_{B}(Y)+r_{B}(X),  \forall X,Y \in \ell^{OI}\]

\textbf{3.} ('zero') $\exists Z \in \ell^{OI}, \forall X \in \ell^{OI}$, we have that:
\[
\begin{aligned}
r_{B}(X)+r_{B}(Z)&=r_{B}(X)\\
B^{-1}X+B^{-1}Z &=B^{-1}X\\
 B^{-1}[X+Z] &=B^{-1}X
\end{aligned}
\]
so
\[ B^{-1}[X+Z]=B^{-1}(X)\Rightarrow X+Z=X \Leftrightarrow Z=O. \]
Hence the 'zero' element is $r_{B}(O)$.

\textbf{4.} ('opposite') $\forall X \in \ell^{OI}, \exists Y \in \ell^{OI}$, we have that:
\[
\begin{aligned}
r_{B}(X)+r_{B}(Y)&=r_{B}(O)\\
B^{-1}X+B^{-1}Y&=B^{-1}O\\
 B^{-1}[X+Y]&=O 
\end{aligned}
\]
so
\[ B^{-1}[X+Y]=B^{-1}O\Rightarrow X+Y=O \Leftrightarrow Y=-X. \]
Hence the 'opposite' element of $r_{B}(X)$ is $r_{B}(-X)$, $\forall X \in \ell^{OI}$.
\qed

\begin{theorem} \label{thm.02}
Let's have the point $B$ different for point $O$, in  $\ell^{OI}-$line. The ratio-maps-set $\{r_{B}(X)|\forall X\in \ell^{OI} \}$ forms a group with 'multiplication of points in $\ell^{OI}-$line.
\end{theorem}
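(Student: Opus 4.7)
The plan is to verify the four group axioms for $\mathcal{R}_2$ under multiplication, following the same template used in Theorem~\ref{thm.01} for addition. I would write every element in the form $r_B(X) = B^{-1}X$ and reduce each computation to the skew-field axioms of $K = (\ell^{OI}, +, \cdot)$ established in the earlier work cited.

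For closure, a direct computation
\[
r_B(X) \cdot r_B(Y) = B^{-1}X \cdot B^{-1}Y = B^{-1}(XB^{-1}Y) = r_B(XB^{-1}Y)
\]
lands back in $\mathcal{R}_2$, since $XB^{-1}Y \in \ell^{OI}$ by closure of multiplication in $K$. Associativity for the triple product $[r_B(X) \cdot r_B(Y)] \cdot r_B(Z)$ then reduces to associativity of $\cdot$ in $K$, exactly parallel to the additive case in Theorem~\ref{thm.01}.

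For the identity element, I would show $r_B(B) = B^{-1}B = I$ is neutral by the explicit check $r_B(X) \cdot r_B(B) = B^{-1}X \cdot I = B^{-1}X = r_B(X)$ on the right, with the symmetric computation $r_B(B) \cdot r_B(X) = r_B(X)$ on the left; so the neutral element $I$ of $K$ supplies the neutral element of $\mathcal{R}_2$.

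The main obstacle is the existence of inverses, and here the statement must be read with $r_B(O) = O$ excluded, exactly as the multiplicative group of any skew field excludes its zero. For $X \neq O$ I would propose the candidate $r_B(BX^{-1}B)$ and verify
\[
r_B(X) \cdot r_B(BX^{-1}B) = (B^{-1}X)(X^{-1}B) = B^{-1}B = I = r_B(B),
\]
with the symmetric check on the left, thereby completing the proof that $\mathcal{R}_2 \setminus \{O\}$ forms a group under the multiplication of points in $\ell^{OI}$.
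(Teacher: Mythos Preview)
Your proposal is correct and follows essentially the same approach as the paper: both write $r_B(X)=B^{-1}X$ and reduce each group axiom to the corresponding skew-field property of $K=(\ell^{OI},+,\cdot)$, arriving at the same identity $r_B(B)=I$ and the same inverse element $Y=BX^{-1}B$ (which the paper then rewrites as $r_B(Y)=X^{-1}B=r_X(B)$). Your version is slightly more careful in that you explicitly check closure and note that $r_B(O)=O$ must be excluded for the multiplicative structure, points the paper leaves implicit.
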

\proof
\textbf{1.} (Associativity) $\forall X,Y,Z \in \ell^{OI}$, we have that:
\[
[r_{B}(X) \cdot r_{B}(Y)]\cdot r_{B}(Z)=[B^{-1}X \cdot B^{-1}Y]\cdot B^{-1}Z \]

\textbf{2.} ('unitary') $\exists E \in \ell^{OI}, \forall X \in \ell^{OI}$, we have that:
\[
\begin{aligned}
r_{B}(X)\cdot r_{B}(E)&=r_{B}(X)\\
B^{-1}X \cdot B^{-1}E=B^{-1}X\\
 X \cdot B^{-1}E&=X \\
B^{-1}E&=X^{-1}X\\
B^{-1}E&=I\\
E&=B
\end{aligned}
\]
so unitary-element element is $r_{B}(B)$.

\textbf{3.} ('inverse') $\forall X \in \ell^{OI}, \exists Y \in \ell^{OI}$, we have that:
\[
\begin{aligned}
r_{B}(X)\cdot r_{B}(Y)&=r_{B}(B)\\
B^{-1}X \cdot B^{-1}Y&=I\\
 X \cdot B^{-1}Y&=B\\
B^{-1}Y&=X^{-1}\cdot B\\
Y&=B\cdot X^{-1}\cdot B
\end{aligned}
\]
so
\[ r_{B}(Y)=B^{-1}Y=B^{-1}[B\cdot X^{-1}\cdot B]=X^{-1}\cdot B=r_X(B). \]
Hence the 'inverse' element of $r_{B}(X)$ is $r_{X}(B)$, $\forall X \in \ell^{OI}$.
\qed

\begin{theorem} \label{thm.03}
In the ratio-maps-set $\{r_{B}(X)|\forall X\in \ell^{OI} \}$, for three elements $r_{B}(X)$, $r_{B}(Y)$, $r_{B}(Z)$ of its, have true, that equations
\begin{enumerate}
	\item $r_{B}(X)\cdot [r_{B}(Y)+r_{B}(Z)]=r_{B}(X)\cdot r_{B}(Y) +r_{B}(X)\cdot r_{B}(Z)$, and,
	\item $[r_{B}(X)+r_{B}(Y)]r_{B}(Z)=r_{B}(X)\cdot r_{B}(Z) +r_{B}(Y)\cdot r_{B}(Z)$
\end{enumerate}
\end{theorem}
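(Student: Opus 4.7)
The plan is to prove both distributivity identities by expanding each ratio map via its definition $r_B(W)=B^{-1}W$ and then invoking the two-sided distributive law already available in the skew field $K=(\ell^{OI},+,\cdot)$. No new geometric construction is required: once we rewrite every $r_B(\cdot)$ in the form $B^{-1}(\cdot)$, the equality becomes a statement purely inside the skew field.

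For item (1), I would first compute the inner sum, using left distributivity of $K$, as
\[
r_B(Y)+r_B(Z)=B^{-1}Y+B^{-1}Z=B^{-1}(Y+Z).
\]
Then, applying left distributivity once more to $B^{-1}X$ acting on the sum $B^{-1}Y+B^{-1}Z$, I would write
\[
r_B(X)\cdot[r_B(Y)+r_B(Z)]=(B^{-1}X)\cdot(B^{-1}Y+B^{-1}Z)=B^{-1}X\cdot B^{-1}Y+B^{-1}X\cdot B^{-1}Z,
\]
and the right-hand side is by definition $r_B(X)\cdot r_B(Y)+r_B(X)\cdot r_B(Z)$.

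For item (2), the argument is the mirror image: collapse $r_B(X)+r_B(Y)=B^{-1}(X+Y)$ by left distributivity, and then apply right distributivity of $K$ to conclude
\[
[r_B(X)+r_B(Y)]\cdot r_B(Z)=(B^{-1}X+B^{-1}Y)\cdot B^{-1}Z=B^{-1}X\cdot B^{-1}Z+B^{-1}Y\cdot B^{-1}Z.
\]

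There is no genuine obstacle in this proof; the only point that demands care is that $K$ is a \emph{skew} field, so we must never swap a factor past $B^{-1}$ or past any other point, and must explicitly distinguish left from right distributivity (both of which do hold in $K$, as already exploited in Theorems \ref{thm.01} and \ref{thm.02}). With that caveat, distributivity in the ratio-maps-set is inherited directly from distributivity in $(\ell^{OI},+,\cdot)$.
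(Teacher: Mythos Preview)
Your proposal is correct and follows essentially the same approach as the paper: expand each $r_B(\cdot)$ as $B^{-1}(\cdot)$ and invoke the two-sided distributivity of the skew field $(\ell^{OI},+,\cdot)$. The paper's argument is in fact slightly terser (it skips the intermediate collapse $B^{-1}Y+B^{-1}Z=B^{-1}(Y+Z)$ and applies distributivity directly), but the substance is identical.
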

\proof
Indeed $r_{B}(X)\cdot [r_{B}(Y)+r_{B}(Z)]=B^{-1}X[B^{-1}Y+B^{-1}Z]$, but the 'ratio' points are points in $\ell^{OI}-$line, so they are also points of skew fields $K=(\ell^{OI},+.\cdot )$, therefore, they enjoy the distribution property, therefore
\[B^{-1}X[B^{-1}Y+B^{-1}Z]=(B^{-1}X)(B^{-1}Y)+(B^{-1}X)(B^{-1}Z), \]
so
\[ r_{B}(X)\cdot [r_{B}(Y)+r_{B}(Z)]=r_{B}(X)\cdot r_{B}(Y) +r_{B}(X)\cdot r_{B}(Z).
\]
The other point (2) is proved in the same way.
\qed

From the above three theorems \ref{thm.01}, \ref{thm.02} and \ref{thm.03}, have true, this, 
\begin{theorem}
The ratio-maps-set $\mathcal{R}_2=\{r_{B}(X)|\forall X\in \ell^{OI} \}$, for a fixed point $B$ in $\ell^{OI}-$line, forms a skew-field with 'addition and multiplication' of points in $\ell^{OI}-$line.

This, skew field $(\mathcal{R}_2, +, \cdot)$ is sub-skew field of the skew field $(\ell^{OI}, +, \cdot)$.
\end{theorem}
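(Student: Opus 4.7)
The plan is to assemble the three preceding Theorems \ref{thm.01}, \ref{thm.02} and \ref{thm.03} into the definition of a skew field, and then verify closure and compatibility with the ambient operations on $\ell^{OI}$.

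First, I would observe that by Theorem \ref{thm.01} the pair $(\mathcal{R}_2, +)$ is an abelian group whose zero is $r_B(O) = B^{-1}\cdot O = O$ and whose opposite of $r_B(X)$ is $r_B(-X)$. By Theorem \ref{thm.02} the pair $(\mathcal{R}_2 \setminus \{O\}, \cdot)$ is a group whose unit is $r_B(B) = B^{-1}\cdot B = I$ and whose inverse of $r_B(X)$ is $r_X(B)$. By Theorem \ref{thm.03} multiplication distributes over addition from both sides. Together these three facts are exactly the axioms of a (not necessarily commutative) skew field, so $(\mathcal{R}_2, +, \cdot)$ is a skew field.

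Next, for the sub-skew-field claim, I would note that each element $r_B(X) = B^{-1}X$ lies on $\ell^{OI}$ (since $B^{-1}$ and $X$ both belong to $\ell^{OI}$ and this line is closed under the multiplication of $K$), hence $\mathcal{R}_2 \subseteq \ell^{OI}$. The set is non-empty, for instance $r_B(B) = I \in \mathcal{R}_2$. Closure of $\mathcal{R}_2$ under the $+$ and $\cdot$ inherited from $\ell^{OI}$ is immediate from Theorems \ref{thm.01} and \ref{thm.02}, since in each proof the sum and product of two $r_B$-images were computed inside $\ell^{OI}$ and recognized as another $r_B$-image. The zero $O$ and unit $I$ of $\mathcal{R}_2$ coincide with those of $K=(\ell^{OI}, +, \cdot)$, and additive opposites and multiplicative inverses in $\mathcal{R}_2$ agree with the corresponding operations in $K$. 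This gives exactly what is required for $(\mathcal{R}_2, +, \cdot)$ to be a sub-skew-field of $(\ell^{OI}, +, \cdot)$.

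The main conceptual point to flag, more a reality check than a genuine obstacle, is that the earlier bijection theorem already shows $r_B : \ell^{OI} \to \ell^{OI}$ is surjective, so in fact $\mathcal{R}_2 = \ell^{OI}$ as a set. Hence the sub-skew-field in question is, up to relabelling, the whole skew field $K$ itself, and the entire argument ultimately reduces to observing that the three group/distributivity theorems transport cleanly along the bijection $r_B$. No further calculation beyond citing Theorems \ref{thm.01}, \ref{thm.02}, \ref{thm.03} is needed.
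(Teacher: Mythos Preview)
Your proposal is correct and follows essentially the same approach as the paper: the paper's argument is nothing more than the single line ``From the above three theorems \ref{thm.01}, \ref{thm.02} and \ref{thm.03}'' preceding the statement, so citing those results to assemble the skew-field axioms is exactly what is intended. Your added remark that the earlier bijection theorem forces $\mathcal{R}_2=\ell^{OI}$ as sets, so the sub-skew-field is in fact all of $K$, is a worthwhile observation that the paper does not make explicit.
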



\subsection{Ratio of 3-points in a line of Desargues affine plane}$\mbox{}$\\

In this section we will study the 'ration' of points on a line in Desargues affine planes. We will define 'ratio' as a point in the Desargues affine plane.

The classical definition of the ratio (which is often called "simple ratio") (see, {\em e.g.},~\cite{Hilbert1959geometry}, \cite{Berger2010geometryRevealed}, \cite{Berger2009geometry12}), which is given as a ratio of lengths, (in coordinate geometry).  So, for example, for three collinear points $A,B,C$,
\[R=\frac{AC}{BC}
\]
where $AC$ is length of segment $[AB]$ and $BC$ is length of segment $[BC]$, or in vectorial form: 'for three collinear points, $A,B,C$, a 'simple ratio' called the 'scalar' $\lambda$, such that,

\[
\vec{AC}=\lambda \vec{BC}
\]

Since we will not use coordinates and metrics, so we have only the axiomatic of Desargues affine plane (our objects are only points and lines), we take as definition the strictly algebraic, somewhat abstractly,

\begin{notation}
For three points $A, B, C$ on a line $\ell^{OI}$ (collinear) in Desargues affine plane, $B-C$ denotes the point $B+(-C)$ and $A-C$ denotes the point $A+(-C)$. The operator '-' denotes the addition of point $B$ with an oposite point of point $C$~ \cite{ZakaFilipi2016}, \cite{FilipiZakaJusufi}, \cite{ZakaThesisPhd}.
\qquad\textcolor{blue}{\Squaresteel}
\end{notation}

\begin{definition}\label{ratiodef}
If $A, B, C$ are three points on a line $\ell^{OI}$ (collinear) in Desargues affine plane, then we define their \textbf{ratio} to be a point $R \in \ell^{OI}$, such that:
\[
(B-C)\cdot R=A-C, \quad \mbox{concisely}\quad R=(B-C)^{-1}(A-C)
\]
and we mark this with
\[r(A,B;C)= (B-C)^{-1}(A-C)
\]
\end{definition}

\begin{figure}[htbp]
	\centering
		\includegraphics[width=0.9\textwidth]{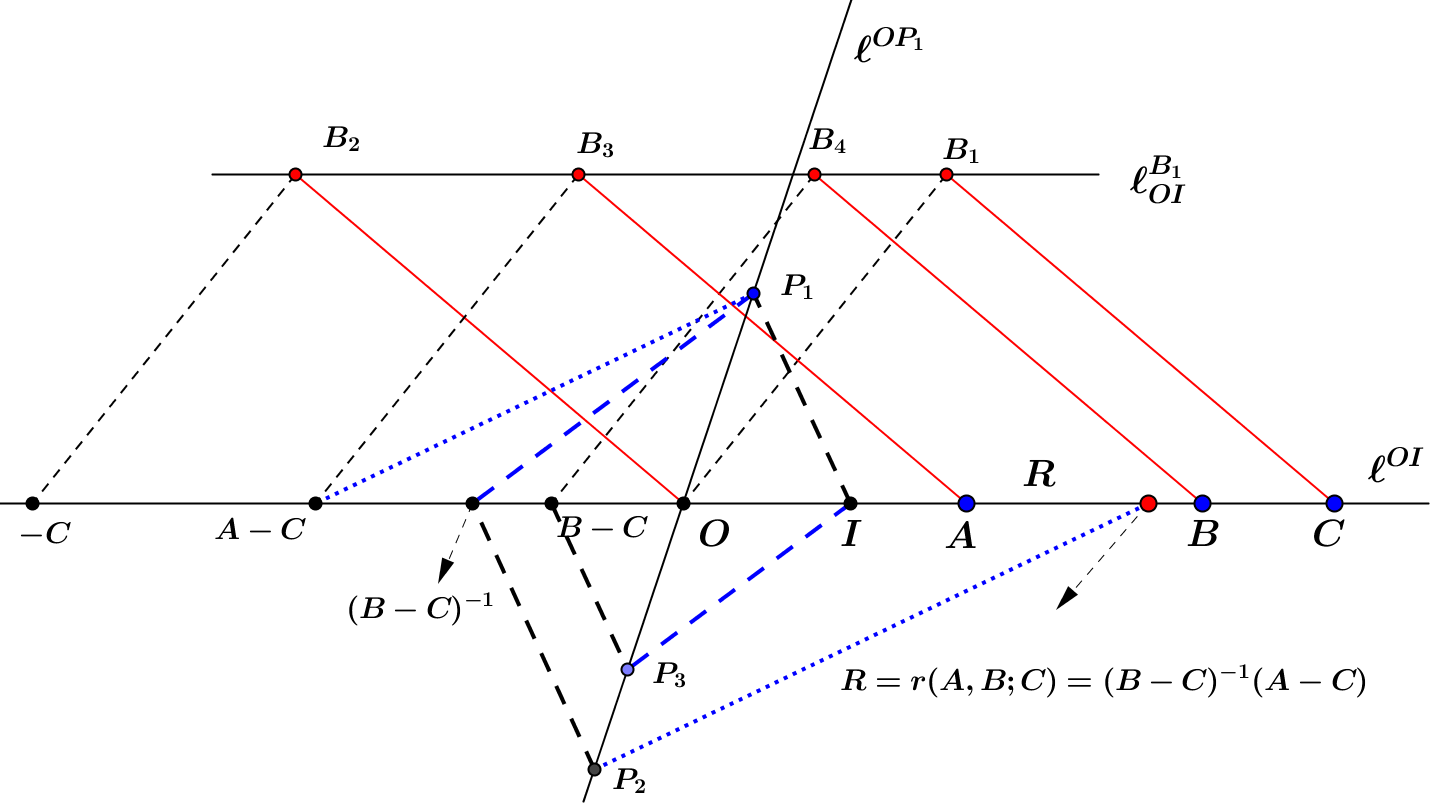}
		\caption{Ratio of 3-Points in a line of Desargues affine plane $R=r(A,B;C)$.}
		\label{ratioThreepoints}
\end{figure}

\begin{example}
In Fig. \ref{Ratio2points}, we have $R$ equal to the ratio of points $A$ and point $B$ in the line $\ell^{OI}$ in the Desargues affine plane.  Similarly, in Fig. \ref{ratioThreepoints}, from Def. \ref{ratiodef}, we have the ratio $R = r(A,B;C)$ of points $A,B,C$.
\qquad\textcolor{blue}{\Squaresteel}
\end{example}

\begin{remark}
Similarly as in the ratio of two points, we can also define a ratio of three points, as 
\[r(A,B;C)=(A-C)(B-C)^{-1},\]
the results would be similar. Except in $\ell^{OI}-$line, in the Desargues affine planes, the usual ratio points differ from ratios in Def.~\ref{ratiodef}, since:
\[ (A-C)(B-C)^{-1} \neq (B-C)^{-1} (A-C).
\]
However, we are keeping our definition!
\qquad\textcolor{blue}{\Squaresteel}
\end{remark}

In the \cite{ZakaThesisPhd}, \cite{ZakaFilipi2016}, \cite{FilipiZakaJusufi},  we prove that an line $\ell^{OI}$in Desargues affine plane, with 'addition' and 'multiplication' of points in it, forms a Skew-Field, so $(\ell^{OI}, +, \cdot )-$is a skew field. For this reason we can define it, 'ratio' also for skew-fields. So, for a skew fields $K$, if we have elements $a, b, c \in K$ then ratio we call the point $r=(b-c)^{-1}(a-c)$.
However, our work is more geometric, so we will follow the geometric interpretation. But we will always keep in mind that: 'a line in Desargues affine plane, is a skew-field'.

For a 'ratio-point' $R\in \ell^{OI}$ and tow points $B,C \in \ell^{OI}$, is a uniquely defined point $A \in \ell^{OI}$, such that $R=r(A,B;C)$.

\begin{theorem}\label{reverse.ratio}
For 3-points $A,B,C$ in a line $\ell^{OI}$ of Desargues affine plane, we have that,
\[r(-A,-B;-C)=r(A,B;C)
\]
\end{theorem}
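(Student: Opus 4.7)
The plan is to reduce $r(-A,-B;-C)$ directly to $r(A,B;C)$ by unfolding the definition and then using two elementary skew-field identities: the ``sign distributes over subtraction'' identity $(-x)-(-y) = -(x-y)$, and the identity $(-x)^{-1} = -(x^{-1})$ valid for any nonzero $x$ in a skew field.

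First I would apply Definition \ref{ratiodef} to the left-hand side to obtain
\[
r(-A,-B;-C) = \bigl((-B)-(-C)\bigr)^{-1}\bigl((-A)-(-C)\bigr).
\]
Next I would rewrite each factor. Since addition of points in $\ell^{OI}$ makes $(\ell^{OI},+)$ an abelian group, we have $(-B)-(-C) = -B + C = -(B-C)$, and similarly $(-A)-(-C) = -(A-C)$. So the expression becomes
\[
r(-A,-B;-C) = \bigl(-(B-C)\bigr)^{-1}\bigl(-(A-C)\bigr).
\]

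Then I would invoke the skew-field identity $(-x)^{-1} = -(x^{-1})$ (valid because $(-x)\cdot(-(x^{-1})) = x\cdot x^{-1} = I$) to get $\bigl(-(B-C)\bigr)^{-1} = -(B-C)^{-1}$, and use the sign rule $(-u)(-v) = uv$ in the skew field $(\ell^{OI},+,\cdot)$ to cancel the two negative signs:
\[
\bigl(-(B-C)^{-1}\bigr)\bigl(-(A-C)\bigr) = (B-C)^{-1}(A-C) = r(A,B;C).
\]

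The implicit preconditions are that $B \neq C$ so the inverse $(B-C)^{-1}$ exists, which is exactly the same hypothesis needed for $r(A,B;C)$ itself to be defined, so no extra case analysis is required. There is really no main obstacle here: the argument is three lines of skew-field algebra, and the only place one must be careful is to derive $(-x)^{-1} = -(x^{-1})$ from the defining relations of $(\ell^{OI},+,\cdot)$ rather than import it from commutative ring intuition; this follows at once from $(-I)\cdot(-I) = I$ together with associativity.
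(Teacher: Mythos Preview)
Your proof is correct and is essentially the same argument as the paper's: both unfold the definition, pull out the common factor of $-I$ (you phrase this via the identities $(-x)-(-y)=-(x-y)$ and $(-x)^{-1}=-(x^{-1})$, the paper writes $-B+C=(-I)(B-C)$ explicitly and then applies $(ab)^{-1}=b^{-1}a^{-1}$ together with $(-I)^{-1}=-I$), and then cancel the two signs using $(-I)(-I)=I$. The only difference is notational packaging.
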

\proof
For proof of this, we will use the definition of ratio, and   skew-fields properties, so we have,
\begin{equation*}
\begin{aligned}
r(-A,-B;-C)&=(-B-(-C))^{-1}(-A-(-C))\\
	&= (-B+C)^{-1}(-A+C)\\
 &=(-I[B-C])^{-1}[-I](A-C) \\
&=(B-C)^{-1}[-I]^{-1}[-I](A-C) \\
&=(B-C)^{-1}[-I][-I](A-C) \\
&=(B-C)^{-1}(A-C) \\
&=r(A,B;C) 
\end{aligned}
\end{equation*}
From skew fields properties we have that $(ab)^{-1}=b^{-1}a^{-1}$and $ab\neq ba$,  $[-I]^{-1}=-I$, and $[-I][-I]=I.$
\qed

\begin{theorem}\label{inversratio}
For 3-points $A,B,C$ in a line $\ell^{OI}$ in the Desargues affine plane, have
\[r^{-1}(A,B;C)=r(B,A;C)
\]
\end{theorem}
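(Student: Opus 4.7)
The plan is to unfold both sides using Definition \ref{ratiodef} and reduce the identity to a single application of the skew-field identity $(xy)^{-1}=y^{-1}x^{-1}$ together with the involution $(x^{-1})^{-1}=x$. Since $(\ell^{OI},+,\cdot)$ is a skew field, all the needed inversion rules are available, and no appeal to commutativity of multiplication is required (which is important, because in general $(A-C)(B-C)^{-1}\neq (B-C)^{-1}(A-C)$).

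First I would compute the left-hand side. By Definition \ref{ratiodef},
\[
r(A,B;C)=(B-C)^{-1}(A-C),
\]
so, taking the multiplicative inverse of this product in the skew field and using $(xy)^{-1}=y^{-1}x^{-1}$ followed by $(x^{-1})^{-1}=x$, I get
\[
r^{-1}(A,B;C)=\bigl[(B-C)^{-1}(A-C)\bigr]^{-1}=(A-C)^{-1}\bigl[(B-C)^{-1}\bigr]^{-1}=(A-C)^{-1}(B-C).
\]
Next I would compute the right-hand side directly from the definition, simply swapping the roles of $A$ and $B$:
\[
r(B,A;C)=(A-C)^{-1}(B-C).
\]
Comparing the two expressions gives the claimed equality.

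For the computation to be legitimate I should note the side condition that the inverses involved actually exist, i.e.\ $A\neq C$ and $B\neq C$ (so that $A-C\neq O$ and $B-C\neq O$ in the additive group of $\ell^{OI}$); this is precisely the analogue of the hypothesis $B\neq O$ used in the 2-point case. I do not expect a genuine obstacle here: the only subtlety is to resist the temptation to ``cancel'' or reorder factors as in a commutative field, and to keep the order of the factors exactly as dictated by $(xy)^{-1}=y^{-1}x^{-1}$. Once the two one-line computations are written in the correct order, the identity $r^{-1}(A,B;C)=r(B,A;C)$ is immediate.
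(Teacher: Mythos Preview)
Your proof is correct and follows exactly the same route as the paper: unfold $r(A,B;C)=(B-C)^{-1}(A-C)$, apply the skew-field identity $(xy)^{-1}=y^{-1}x^{-1}$ and $(x^{-1})^{-1}=x$ to obtain $(A-C)^{-1}(B-C)$, and recognize this as $r(B,A;C)$. Your explicit remark that $A\neq C$ and $B\neq C$ are needed for the inverses to exist is a welcome clarification that the paper leaves implicit.
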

\proof
For proof of this, we will use the definition of ratio, and   skew-fields properties, so we have,
\begin{equation*}
\begin{aligned}
r^{-1}(A,B;C)&=[(B-C)^{-1}(A-C)]^{-1}\\
	&= (A-C)^{-1}[(B-C)^{-1}]^{-1}\\
 &=(A-C)^{-1}(B-C) \\
&=r(B,A;C) 
\end{aligned}
\end{equation*}

\qed

\begin{properties}\label{properties:3points}
From Def.~\ref{ratiodef}, we derive a number of properties related to the position of the points $A,B,C$ in $\ell^{OI}-$line in the Desargues affine plane.
\begin{itemize}
	\item If $A=B=O$ and $C\neq O$, have, 
	\[ r(A,B;C)=(O-C)^{-1}(O-C)=I. \]
	\item If $A=B=O$ and $C=I$, have, 
	\[ r(A,B;C)=(O-I)^{-1}(O-I)=I. \]
	\item If $A=B=I$ and $C=O$, have ,
	\[ r(A,B;C)=(I-O)^{-1}(I-O)=I.\]
	\item If $A=B=I$ and $C\neq I$, have,
	\[ r(A,B;C)=(I-C)^{-1}(I-C)=I. \]
	\item If $A=B=O$ and $C\neq O$ and $C\neq I$, have,
	\[ r(A,B;C)=(O-C)^{-1}(O-C)=I.\]
	\item If $A=B=I$ and $C\neq O$ and $C\neq I$, have,
	\[ r(A,B;C)=(I-C)^{-1}(I-C)=I.\]
	\item If '$A=B\neq O$ and $A=B\neq I$' and $C\neq A=B$, then 
	\[ r(A,B;C)=(A-C)^{-1}(A-C)=I. \]
	\item If $A=C$ and $B \neq A=C$, then $r(A,B;C)=O$.
	\item If $B=C$ and $A \neq C$, then $r(A,B;C)-$is the infinite point, witch we mark with '$\infty$'. \qquad\textcolor{blue}{\Squaresteel}
\end{itemize}
\end{properties}

From Properties~\ref{properties:3points}, we obtain the following result.

\begin{theorem}
For 3-points in $\ell^{OI}-$line in the Desargues affine plane. if $A=B$ and $C\neq A=B$, then $r(A,B;C)=I$, where $I$ is unitary point for 'multiplication of points' in $\ell^{OI}-$line 
\end{theorem}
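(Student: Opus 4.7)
The plan is to reduce the statement directly to the defining formula of Def.~\ref{ratiodef} and exploit the skew-field structure of $(\ell^{OI},+,\cdot)$ that is established in the cited earlier works. First, I would substitute $B = A$ into $r(A,B;C) = (B-C)^{-1}(A-C)$, which immediately collapses the right-hand side to $(A-C)^{-1}(A-C)$. There is no geometric construction needed beyond recalling that $B-C$ is shorthand for $B + (-C)$ in the line-skew field.

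The only point requiring attention is the well-definedness of $(A-C)^{-1}$, and this is exactly where the hypothesis $C \neq A = B$ is used. In the additive group of the line-skew field, $A - C = O$ is equivalent to $A = C$; since we are told $C \neq A$, we obtain $A - C \neq O$, and therefore the multiplicative inverse $(A-C)^{-1}$ exists in $\ell^{OI}$. Once the inverse is guaranteed, the equality $(A-C)^{-1}(A-C) = I$ is the defining identity of multiplicative inverses in a skew field, yielding $r(A,B;C) = I$ as required.

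The main — and really only — obstacle is to observe that this one-line argument works uniformly across all the subcases enumerated in Properties~\ref{properties:3points} (namely $A=B=O$ with $C \neq O$; $A=B=I$ with $C \neq I$; and the generic case $A=B \notin \{O,I\}$ with $C \neq A$). In each subcase the hypothesis $C \neq A=B$ guarantees $A - C \neq O$, so no case-splitting is actually necessary and the theorem can be presented as a single application of Def.~\ref{ratiodef} followed by cancellation in the skew field.
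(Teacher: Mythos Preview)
Your proposal is correct and matches the paper's approach: the paper derives the theorem immediately from Properties~\ref{properties:3points}, each item of which is exactly the substitution $r(A,A;C)=(A-C)^{-1}(A-C)=I$ in a particular subcase. Your observation that the hypothesis $C\neq A$ makes $(A-C)^{-1}$ well-defined and hence renders the case-splitting unnecessary is a mild streamlining of the paper's presentation, but the underlying argument is identical.
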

For three points $A,B,C$ such that $r(A,B;C)=R$, we have defined above, $r(A,B;C)=(B-C)^{-1}(A-C)=R$. We can transform this expression into the form used by L\"{u}ngerburg in \cite{Luneburg1967}, and have that,

$(B-C)^{-1}(A-C)=R\Rightarrow A-C=(B-C)R \Rightarrow A-C=BR-CR$
so
\[A=BR+C-CR \Rightarrow A=BR+C(I-R) \]
This form makes it easier for us to study the following cases, when, $B=C$ and $A=C$! So, 

\begin{itemize}
	\item If $B=C$, then have $A=BR+C(I-R)\Rightarrow A=BR+B(I-R)$, so 
	\[ A=BR+B-BR\Rightarrow \textbf{A=B}.\]
	So 'If $B=C$ then $B=C=A$'.
	\item If $A=C$, then we have $A=BR+C(I-R)\Rightarrow A=BR+A(I-R)$, so,
	\[ A=BR+A-AR\Rightarrow  O=BR-AR \Rightarrow \textbf{B=A}.\]
	So 'If $A=C$ then $A=C=B$'.
\end{itemize}

Immediate from Properties~\ref{properties:3points}, we obtain the following results.

\begin{theorem}
For three points $A,B,C$ in a line $\ell^{OI}$ on Desargues affine plane, i.e., $r(A,B;C)=R$, 
\begin{enumerate}
	\item If $B=C$ then $B=C=A$.
	\item If $A=C$ then $A=C=B$.
\end{enumerate}
\end{theorem}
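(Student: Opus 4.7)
The plan is to leverage the linear reformulation $A = BR + C(I-R)$ derived in the paragraph immediately preceding the theorem, which is equivalent to $r(A,B;C) = R$ whenever the ratio is defined. It is obtained from $(B-C)^{-1}(A-C) = R$ by left-multiplying by $B-C$ and expanding via distributivity of multiplication over addition in the skew field $K = (\ell^{OI}, +, \cdot)$. Each case of the theorem then reduces to a single substitution followed by a cancellation carried out in the additive and multiplicative group structures on $\ell^{OI}$ already established in Theorems \ref{thm.01} and \ref{thm.02}.

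For part (1), I would assume $B = C$ and substitute into $A = BR + C(I-R)$ to get $A = BR + B(I-R)$. Distributing and using the commutativity and associativity of addition in $K$ yields $A = BR + B - BR = B$. Together with the hypothesis $B = C$, this produces $A = B = C$, as required.

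For part (2), I would assume $A = C$ and substitute to obtain $A = BR + A(I-R) = BR + A - AR$. Cancelling the $A$ on both sides and rearranging gives $(B-A)R = O$. Because $K$ is a skew field, it has no nonzero zero-divisors, so either $R = O$ or $B - A = O$. The main obstacle here is the first alternative: Properties \ref{properties:3points} explicitly allow $R = O$ in the degenerate situation $A = C$ with $B \neq A$, so the conclusion $B = A$ cannot be extracted from $(B-A)R = O$ alone without further input. The cleanest remedy is to read the theorem under the tacit non-degeneracy $R \neq O$ implicit in treating $R$ as a proper ratio-point of $\ell^{OI}$ (as opposed to the special values $O$ or $\infty$ catalogued in Properties \ref{properties:3points}). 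Under that convention the skew-field cancellation forces $B = A$, and hence $A = C = B$, which completes the proof.
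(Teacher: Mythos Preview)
Your approach is essentially identical to the paper's: the paper derives the L\"uneburg form $A = BR + C(I-R)$ in the paragraph immediately preceding the theorem and then substitutes $B=C$ and $A=C$ exactly as you do, obtaining $A=B$ in each case.

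You have in fact been more careful than the paper on part~(2). The paper writes $O = BR - AR \Rightarrow B = A$ without comment, silently assuming $R \neq O$; you correctly observe that Properties~\ref{properties:3points} record the degenerate case $A=C$, $B\neq A$, $R=O$, so the implication fails without the tacit hypothesis $R\neq O$. Your proposed remedy---reading the statement under the convention that $R$ is a genuine ratio-point distinct from the exceptional values $O$ and $\infty$---is exactly the unstated assumption the paper is relying on.
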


\begin{theorem} \label{ratio-inverses}
If  $A,B,C$, are three different points, and different from point $O$, in a line $\ell^{OI}$ on Desargues affine plane, then
\[ r(A^{-1},B^{-1};C^{-1})=B[r(A,B;C)]A^{-1}.\] 
\end{theorem}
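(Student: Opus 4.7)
My plan is to verify the identity by computing the left-hand side $r(A^{-1},B^{-1};C^{-1})$ directly from Definition \ref{ratiodef} and rearranging, in the same spirit as the proofs of Theorems \ref{reverse.ratio} and \ref{inversratio}. First I would record that under the hypotheses (three distinct points, all different from $O$) the elements $A,B,C,A-C,B-C$ are all nonzero in the skew field $(\ell^{OI},+,\cdot)$ and hence invertible, so both sides of the claim are well defined.

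The central algebraic tool will be the identity $X^{-1}-Y^{-1}=X^{-1}(Y-X)Y^{-1}$, which holds in any skew field by a one-line check. Applying it twice gives
\[ B^{-1}-C^{-1}=B^{-1}(C-B)C^{-1},\qquad A^{-1}-C^{-1}=A^{-1}(C-A)C^{-1}. \]
Inverting the first via $(XYZ)^{-1}=Z^{-1}Y^{-1}X^{-1}$ and substituting into $r(A^{-1},B^{-1};C^{-1})=(B^{-1}-C^{-1})^{-1}(A^{-1}-C^{-1})$ produces $C(C-B)^{-1}BA^{-1}(C-A)C^{-1}$. Since $-I$ is central, I pull the minus signs out of $C-B$ and $C-A$; they cancel, and I obtain
\[ r(A^{-1},B^{-1};C^{-1})=C(B-C)^{-1}BA^{-1}(A-C)C^{-1}. \]

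The remaining task is to show this equals $B(B-C)^{-1}(A-C)A^{-1}$. Using the splittings $B=(B-C)+C$ and $A=(A-C)+C$, one obtains the four basic reductions
\[ (B-C)^{-1}B=I+(B-C)^{-1}C,\qquad A^{-1}(A-C)=I-A^{-1}C, \]
\[ B(B-C)^{-1}=I+C(B-C)^{-1},\qquad (A-C)A^{-1}=I-CA^{-1}. \]
Substituting the first pair into the expression for $r(A^{-1},B^{-1};C^{-1})$ and using $CA^{-1}AC^{-1}=I$ to collapse the outer conjugation by $C$, the left-hand side simplifies to $[I+C(B-C)^{-1}][I-CA^{-1}]$. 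Substituting the second pair into $B(B-C)^{-1}(A-C)A^{-1}$ yields exactly the same expression, completing the proof.

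The anticipated obstacle is purely bookkeeping rather than conceptual: in a noncommutative skew field each factor must be kept in its exact position, so the identities above must be applied in precisely the right order, and one must resist the temptation to ``cancel'' $B$ with $B^{-1}$ across an intervening factor. With that care, the argument is a direct skew-field computation of the same flavor as Theorem \ref{inversratio}.
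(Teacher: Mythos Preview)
Your argument is correct, and it follows the same overall strategy as the paper: rewrite the differences $B^{-1}-C^{-1}$ and $A^{-1}-C^{-1}$ via a difference-of-inverses identity and simplify. The only substantive difference is the \emph{form} of that identity. The paper uses
\[
X^{-1}-Y^{-1}=Y^{-1}(Y-X)X^{-1},
\]
which places the $C^{-1}$ factor on the \emph{left} of $B^{-1}-C^{-1}$ and on the \emph{left} of $A^{-1}-C^{-1}$. After inverting the first expression one obtains $(B^{-1}-C^{-1})^{-1}=B(C-B)^{-1}C$, so in the product the inner $C$ meets the inner $C^{-1}$ directly and cancels, giving $B(C-B)^{-1}(C-A)A^{-1}=B(B-C)^{-1}(A-C)A^{-1}$ in one step. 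Your choice $X^{-1}-Y^{-1}=X^{-1}(Y-X)Y^{-1}$ puts the $C$'s on the outside instead, which forces the extra four ``splitting'' identities and the conjugation argument. That detour is valid (your conjugation step is really the observation that $x\mapsto CxC^{-1}$ is multiplicative, so $C[PQ]C^{-1}=(CPC^{-1})(CQC^{-1})$ and each trailing $C$ cancels; the phrase ``using $CA^{-1}AC^{-1}=I$'' undersells this), but it is avoidable: switching to the other form of the identity collapses your last paragraph entirely.
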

\proof
Points $A,B,C$ and $A^{-1},B^{-1}, C^{-1}$, are points of $\ell^{OI}-$line in Desargues affine plane, so are and elements of the skew field $K=(\ell^{OI},+,\cdot)$. First we prove that, for tow elements $X,Y$ in a skew field $K$, we have that 
\[X^{-1}-Y^{-1}=Y^{-1}(Y-X)X^{-1}.\]
Indeed
	\[
	\begin{aligned}
	Y^{-1}(Y-X)X^{-1}&=[Y^{-1}(Y-X)]X^{-1}\\
	&=(Y^{-1}Y-Y^{-1}X)X^{-1}\\
	&=(I-Y^{-1}X)X^{-1}\\
	&=IX^{-1}-Y^{-1}(XX^{-1})\\
	&=X^{-1}-Y^{-1}I\\
	&=X^{-1}-Y^{-1}. 
	\end{aligned}
	\]
We use this result in the calculation of $r(A^{-1},B^{-1}; C^{-1})$, we will use the definition of ratio \ref{ratiodef}, and this skew-fields properties, so we have,
\begin{equation*}
\begin{aligned}
r(A^{-1},B^{-1};C^{-1})&=(B^{-1}-C^{-1})^{-1}(A^{-1}-C^{-1})\\
	&=[C^{-1}(C-B)B^{-1}]^{-1}[C^{-1}(C-A)A^{-1}]  \\
  &=[B(C-B)^{-1}C][C^{-1}(C-A)A^{-1}]    \\
	&=B(C-B)^{-1}[CC^{-1}](C-A)A^{-1}    \\
	&=B(C-B)^{-1}[CC^{-1}](C-A)A^{-1}    \\
	&=B(C-B)^{-1}[I](C-A)A^{-1}    \\
	&=B(C-B)^{-1}(C-A)A^{-1}    \\
	&=B[(C-B)^{-1}(C-A)]A^{-1}    \\
	&=B[([-I](B-C))^{-1}[-I](A-C)]A^{-1}    \\
	&=B[(B-C)^{-1}[-I]^{-1}[-I](A-C)]A^{-1}    \\
	&=B[(B-C)^{-1}[-I][-I](A-C)]A^{-1}    \\
	&=B[(B-C)^{-1}(A-C)]A^{-1}    \\
	&=B[r(B,A;C)]A^{-1}. 
\end{aligned}
\end{equation*}
From skew fields properties we have that $(abc)^{-1}=c^{-1}b^{-1}a^{-1}$and $ab\neq ba$,  $[-I]^{-1}=-I$, and $[-I][-I]=I.$
\qed

\begin{corollary}
In the Pappus affine plane, for three point different from point $O$, in $\ell^{OI}-$line, we have
\[ r(A^{-1},B^{-1};C^{-1})=r(A,B;C) \cdot r(B,A;O).\] 
\end{corollary}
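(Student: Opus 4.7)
The plan is to leverage Theorem \ref{ratio-inverses} as the starting identity and then exploit the extra structure available in a Pappus affine plane, namely the commutativity of multiplication of points in $\ell^{OI}$, to rearrange the factors into the desired form. Recall that the authors have already established elsewhere (and cite in the preliminaries) that the ``line-skew field'' $(\ell^{OI},+,\cdot)$ attached to a Desargues affine plane is in fact a commutative field precisely when the plane also satisfies Pappus' axiom. So in our setting $AB = BA$ for every pair of points of $\ell^{OI}$.

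First I would write down what Theorem \ref{ratio-inverses} gives:
\[
r(A^{-1},B^{-1};C^{-1}) \;=\; B\bigl[r(A,B;C)\bigr]A^{-1}.
\]
Next I would compute $r(B,A;O)$ directly from Definition \ref{ratiodef}:
\[
r(B,A;O) \;=\; (A-O)^{-1}(B-O) \;=\; A^{-1}B.
\]
Since we are in a Pappus plane, $A^{-1}B = BA^{-1}$, so $r(B,A;O) = BA^{-1}$.

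The third step is to move the leading $B$ in $B\,r(A,B;C)\,A^{-1}$ past the middle factor. Because multiplication in $\ell^{OI}$ is commutative in the Pappus case, we have
\[
B\bigl[r(A,B;C)\bigr]A^{-1} \;=\; r(A,B;C)\cdot B\cdot A^{-1} \;=\; r(A,B;C)\cdot (BA^{-1}) \;=\; r(A,B;C)\cdot r(B,A;O),
\]
which is exactly the claimed identity.

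I do not anticipate a serious obstacle: the only subtle point is making explicit that the corollary is stated in the Pappus plane so that commutativity is available, since Theorem \ref{ratio-inverses} alone (valid in any Desargues plane) yields the non-commutative sandwich $B\,r(A,B;C)\,A^{-1}$, which in a genuinely non-commutative skew field cannot in general be rewritten as a product of two ratios. Thus the proof essentially amounts to: (i) quote Theorem \ref{ratio-inverses}, (ii) identify $r(B,A;O)$ with $A^{-1}B=BA^{-1}$, and (iii) invoke Pappus/commutativity to collapse the sandwich into the desired product.
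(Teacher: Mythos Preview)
Your proposal is correct and follows essentially the same route as the paper: quote Theorem~\ref{ratio-inverses} to get $B\,[r(A,B;C)]\,A^{-1}$, use the commutativity of $(\ell^{OI},+,\cdot)$ in the Pappus case to pull the outer factors together, and identify $A^{-1}B=(A-O)^{-1}(B-O)=r(B,A;O)$. The only cosmetic difference is the order in which you perform the last two steps.
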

\proof In theorem \ref{ratio-inverses} we prove that,
\[ r(A^{-1},B^{-1};C^{-1})=B[r(A,B;C)]A^{-1}.\] 
We also know that the skew field, which constructed on a line in Pappus affine plane, is commutative, so have
\[ r(A^{-1},B^{-1};C^{-1})=B[r(A,B;C)]A^{-1}=r(A,B;C)[A^{-1}B].\] 
We also know that $A=A-O$ and $B=B-O$, from construction of skew fields, then,
\[A^{-1}B=(A-O)^{-1}(B-O)=r(B,A;O).
\]
Hence,
\[ r(A^{-1},B^{-1};C^{-1})=r(A,B;C) r(B,A;O).\] 
\qed

We can say, now, that all three points in $\ell^{OI}-$line produce a point, which we have called 'ratio'. So, we can construct the map:
\[
r_{BC}(X)=r(X,B;C)=(B-C)^{-1}(X-C), \forall X \in \ell^{OI}
\]
We call this map, '\textbf{equation of line which passes through the points $B$ and $C$}', (in fact, this is the line $\ell^{OI}$, since $B,C \in \ell^{ OI}$).

For this line we have, $r_{BC}(C)=r(C,B;C)=(B-C)^{-1}(C-C)=O$ and 
$r_{BC}(B)=r(B,B;C)=(B-C)^{-1}(B-C)=I$, and for all point $X \in \ell^{OI}$ have $r_{BC}(X)=r(X,B;C)$, which is the ratio point.

If we write the equation of the line $\ell^{OI}$, in the classical form, as $M\cdot X+N$, for two fixed points $M,N \in \ell^{OI}$ and $ \forall X \in \ell^{OI}$. 
In fact, it is easy to find who the points $M$ and $N$ are, we transform the expression,
\[
r_{BC}(X)=r(X,B;C)=(B-C)^{-1}(X-C)=(B-C)^{-1}\cdot X+ [(B-C)^{-1}(-C)]
\]
so have points $M=(B-C)^{-1}$ and $N=(B-C)^{-1}(-C)$.

And since for the three points $A,B,C \in \ell^{OI}$ we have that 'lines' $\ell^{CA}\equiv \ell^{CB}\equiv \ell^{OI}$, we have that,
\[
r_{CA}(X)=r(X,C;A)=(C-A)^{-1}(X-A), \forall X \in \ell^{OI}
\]
and
\[
r_{CB}(X)=r(X,C;B)=(C-B)^{-1}(X-B), \forall X \in \ell^{OI}
\]
also
\[ r_{CA}(X)=M\cdot X +N \quad \text{and} \quad r_{CB}(X)=M\cdot X +N.
\]
These equations are true and for points, $B$ and $A$, respectively, so
\[
r_{CA}(B)=r(B,C;A)=(C-A)^{-1}(B-A)=M\cdot B +N,
\]
and
\[
r_{CB}(A)=r(A,C;B)=(C-B)^{-1}(A-B)=M\cdot A+N.
\]
If we subtract side-by-side, we have,
\[
r_{CB}(A)-r_{CA}(B)=((C-B)^{-1}(A-B) - (C-A)^{-1}(B-A)=MA+N-MB-N,
\]
so
\[
(C-B)^{-1}(A-B)-(C-A)^{-1}(B-A)=M(A-B)
\]
thus
\[
(C-B)^{-1}(A-B)+(C-A)^{-1}(A-B)=M(A-B)
\]
since $B-A=[-I](A-B)$ and 
\[ (C-A)^{-1}(B-A)=(C-A)^{-1}[-I](A-B)=[-I](C-A)^{-1}(A-B). \]
The factor $A-B$ appears on both sides and we factorize this and have,
\[
[(C-B)^{-1}+(C-A)^{-1}](A-B)=M(A-B)
\]
Since the points $A$ and $B$, we have assumed them to be different, we have that $A-B\neq O$, then, there exists the inverse $(A-B)^{-1}$. We multiply from the right side by side by $(A-B)^{-1}$ and we have,
\[
[(C-B)^{-1}+(C-A)^{-1}](A-B)(A-B)^{-1}=M(A-B)(A-B)^{-1},
\]
hence,
\[
[(C-B)^{-1}+(C-A)^{-1}]=M.
\]
Now we can discuss a little about this value of point $M$, and ask whether the point $M$ can be exactly the point $O$.

If $M=O$, we have that,
\[ (C-B)^{-1}+(C-A)^{-1}=O \Leftrightarrow (C-B)^{-1}=-(C-A)^{-1} \Leftrightarrow (C-B)=-(C-A) \]
This is equivalent with,
\[ C+C=A+B \quad \text{or}\quad 2C=A+B. \]

If characteristic of skew field $K=(\ell^{OI}, +, \cdot)$ is different from $2$ ($Char(K)\neq 2$), we have that, the point $C$ is \emph{"midpoint"} of points $A$ and $B$.

If characteristic of skew field $K=(\ell^{OI}, +, \cdot)$ is $2$, ($Char(K)=2$) then we have that, $A+B=O$ and $2A=O$, so $A+B=2A \Leftrightarrow A-B=O$, for this we have $A=B$ which is against our assumption!

Therefore, the point $M=O$, if and only if characteristic of skew field $K=(\ell^{OI}, +, \cdot)$ is different from $2$, and if $A+B=C+C.$

For $M=O$, we have true the equation
\[
r_{CB}(A)=r(A,C;B)=(C-B)^{-1}(A-B)=N, \quad\text{and} \quad A+B=C+C,
\]
We substitute $A=C+C-B$ and have,
\[
\begin{aligned}
N&=(C-B)^{-1}(C+C-B-B)\\
&=(C-B)^{-1}[(C-B)+(C-B)]\\
&=(C-B)^{-1}(C-B)+(C-B)^{-1}(C-B)\\
&=I+I=2I.
\end{aligned}
\]
So, $N=I+I$. We also see the other equation,

\[
r_{CA}(B)=r(B,C;A)=(C-A)^{-1}(B-A)=N, \quad\text{and} \quad A+B=C+C,
\]
We substitute $B=C+C-A$ and have,
\[
\begin{aligned}
N&=(C-A)^{-1}(B-A)\\
&=(C-A)^{-1}(C+C-A-A)\\
&=(C-A)^{-1}[(C-A)+(C-A)]\\
&=(C-A)^{-1}(C-A)+(C-A)^{-1}(C-A)\\
&=I+I.
\end{aligned}
\]
So, $N=I+I$.

In general, the form $MX+N$ is line equation for the line $\ell^{OI}$, we can find exactly, who are the points $M$ and $N$, in $\ell^{OI}-$line. Since we know that the points $O,I\in \ell^{OI}$.

Indeed, let us have, points $B,C \ell^{OI}$, we have the line equation for $\ell^{BC}\equiv\ell^{OI}$,
\[
r_{BC}(X)=r(X,B;C)=(B-C)^{-1}(X-C), \forall X \in \ell^{OI}
\]
and on the other hand, we will have,
\[
r_{BC}(X)=MX+N, \forall X \in \ell^{OI}
\]
so,
\[
(B-C)^{-1}(X-C)=MX+N, \forall X \in \ell^{OI}
\]
Since $O\in \ell^{OI}$, we have true the equations,
\[
(B-C)^{-1}(O-C)=M\cdot O+N=N
\]
so the point $N$, is,
\[
N=-(B-C)^{-1}C \quad \text{or}\quad N=(C-B)^{-1}C
\]

Since $I\in \ell^{OI}$, we have true the equations,
\[
(B-C)^{-1}(I-C)=M\cdot I+N,
\]
replace $N$, found above, and we have
\[
(B-C)^{-1}(I-C)=M+N\Leftrightarrow M=(B-C)^{-1}(I-C)-[-(B-C)^{-1}C]
\]
So
\[
M=(B-C)^{-1}(I-C)+(B-C)^{-1}C=(B-C)^{-1}[(I-C)+C]
\]
Hence the point $M$, is,
\[
M=(B-C)^{-1}.
\]

So for two different-fixed point $B,C\in \ell^{OI}-$line we have the map,
\[
r_{BC}: \ell^{OI} \to \ell^{OI}, 
\]
such that,
\[
\forall X \in \ell^{OI}, r_{BC}(X)=(B-C)^{-1}(X-C).
\]
\begin{theorem}
This ratio-map, $r_{BC}: \ell^{OI} \to \ell^{OI}$ is a bijection in $\ell^{OI}-$line in Desargues affine plane. 
\end{theorem}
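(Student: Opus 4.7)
The plan is to mirror the proof structure used earlier for the map $r_{B}$, exploiting the fact that $(\ell^{OI},+,\cdot)$ is a skew field, that $B \neq C$ forces $B-C \neq O$, and that a skew field has a multiplicative inverse for every nonzero element and no zero divisors. Once $(B-C)^{-1}$ is known to exist, the map $r_{BC}(X)=(B-C)^{-1}(X-C)$ is essentially an affine-in-skew-field map of the form $X\mapsto MX+N$ with $M=(B-C)^{-1}\neq O$ and $N=-(B-C)^{-1}C$, already computed in the preceding discussion, so bijectivity should reduce to the invertibility of left multiplication by $M$.

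For injectivity, I would start from the hypothesis $r_{BC}(X)=r_{BC}(Y)$, namely $(B-C)^{-1}(X-C)=(B-C)^{-1}(Y-C)$, subtract on the right, factor out $(B-C)^{-1}$ on the left to obtain
\[
(B-C)^{-1}(X-Y)=O,
\]
and invoke the no-zero-divisor property of the skew field $(\ell^{OI},+,\cdot)$. Since $(B-C)^{-1}\neq O$ (as $B\neq C$), this yields $X-Y=O$, i.e.\ $X=Y$.

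For surjectivity, given an arbitrary $R\in\ell^{OI}$ I would exhibit a preimage by solving $(B-C)^{-1}(X-C)=R$ directly: multiply on the left by $(B-C)$, which exists because $B-C\neq O$, to obtain $X-C=(B-C)R$, and hence
\[
X=(B-C)R+C \in \ell^{OI}.
\]
A quick verification then confirms $r_{BC}\!\left((B-C)R+C\right)=(B-C)^{-1}\bigl((B-C)R+C-C\bigr)=(B-C)^{-1}(B-C)R=R$.

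No serious obstacle is expected: the only subtlety is to keep the order of multiplication consistent throughout (since the skew field need not be commutative, $(B-C)^{-1}$ must be kept on the \emph{left}), and to invoke $B\neq C$ explicitly to justify the existence of $(B-C)^{-1}$. Once those two bookkeeping points are respected, the argument is purely formal in the skew field $(\ell^{OI},+,\cdot)$.
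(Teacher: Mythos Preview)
Your proposal is correct and follows essentially the same approach as the paper: both prove injectivity by subtracting and factoring to reach $(B-C)^{-1}(X-Y)=O$ and then invoking the absence of zero divisors, and both prove surjectivity by solving for the preimage $X=(B-C)R+C$ and verifying it directly. Your added remarks about keeping $(B-C)^{-1}$ on the left and explicitly using $B\neq C$ are well taken and consistent with the paper's treatment.
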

\proof
\textbf{Injective:} If, have $r_{BC}(X)=r_{BC}(Y)$, then $X=Y$, truly,
\[r_{BC}(X)=r_{BC}(Y) \Leftrightarrow (B-C)^{-1}(X-C)=(B-C)^{-1}(Y-C)
\]
so
\[ \begin{aligned}
(B-C)^{-1}(X-C)-(B-C)^{-1}(Y-C) &=O \\
&\Leftrightarrow (B-C)^{-1}[(X-C)-(Y-C)] &=O\\
& \Leftrightarrow (B-C)^{-1}(X-Y) &=O
\end{aligned}\]
the above factors are points of the line $\ell^{OI}$, so they are also elements of skew fields $(\ell^{OI}, +, \cdot)$, and we keep in mind that skew field has no divisor of zero, therefore 

\[(B-C)^{-1}(X-Y)=O \Leftrightarrow X-Y=O \Leftrightarrow X=Y.
\]

\textbf{Syrjective:} $\forall R \in \ell^{OI},  \exists X\in \ell^{OI}$, such that, $R=r_{BC}(X)$.

From definition of ratio of three points, we have
\[r_{BC}(X)=(B-C)^{-1}(X-C)=R \Rightarrow X-C=(B-C)R
\]
so
\[ X=(B-C)R+C. \]
Hence, $\forall R \in \ell^{OI},  \exists X=(B-C)R+C \in \ell^{OI}$, such that, $R=r_{BC}[(B-C)R+C]$.

Points $B,C$ are fixed and different points in $\ell^{OI}$, and the point $R$ is in $\ell^{OI}$, we can construct in the line the points $B-C$, $(B-C)R$ and $(B-C)R+C$, and have that,
\[ \begin{aligned}
r_{BC}[(B-C)R+C]&=(B-C)^{-1}[(B-C)R+C-C]\\
&=(B-C)^{-1}[(B-C)R] \\
&=[(B-C)^{-1}(B-C)]R \\
&=I\cdot R\\
&=R.
\end{aligned}
\]
\qed

We mark the set of maps-ratio-three-points, with,

\[
\mathcal{R}_3=\{r_{BC}(X)|\forall X\in \ell^{OI} \}
\]
It's clear that $\mathcal{R}_3 \subseteq \ell^{OI},$ and $\mathcal{R}_3 \neq \emptyset$, below we will prove that this set, is a sub-skew field of $K=(\ell^{OI},+, \cdot )$.  Let's see now the following theorems,
\begin{theorem}\label{thm.11}
Let's have two differetn-fixet points $B, C \in \ell^{OI}$  and this points are different for point $O$, in  $\ell^{OI}-$line. The ratio-maps-set $\mathcal{R}_3=\{r_{BC}(X)|\forall X\in \ell^{OI} \}$ forms a \emph{Abelian} (commutative) group with addition of points in $\ell^{OI}-$line.
\end{theorem}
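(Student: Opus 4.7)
The plan is to mirror the structure of the proof of Theorem~\ref{thm.01}, but with the shift $X \mapsto X - C$ built into every argument. The key tool throughout is left-distributivity in the skew field $K=(\ell^{OI},+,\cdot)$, which gives the factorization
\[
r_{BC}(X) + r_{BC}(Y) = (B-C)^{-1}(X-C) + (B-C)^{-1}(Y-C) = (B-C)^{-1}\bigl[(X-C)+(Y-C)\bigr].
\]
Since $r_{BC}$ is a bijection of $\ell^{OI}$ by the preceding theorem, the set $\mathcal{R}_3$ coincides with $\ell^{OI}$, so closure is automatic; the content lies in identifying the neutral element and the opposites in the form $r_{BC}(\cdot)$.

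First I would verify \emph{associativity} and \emph{commutativity}. Iterating the boxed factorization above and using associativity (resp.\ commutativity) of $+$ in $K$ transfers both properties line by line; this is a formal copy of steps 1 and 2 in the proof of Theorem~\ref{thm.01}, with the substitutions $B^{-1}X \rightsquigarrow (B-C)^{-1}(X-C)$.

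Next, for the \emph{zero} element, I would seek $Z\in\ell^{OI}$ with $r_{BC}(X)+r_{BC}(Z)=r_{BC}(X)$ for all $X$. The factorization reduces this to $(X-C)+(Z-C)=X-C$, hence $Z=C$, and the neutral element is $r_{BC}(C)=(B-C)^{-1}(C-C)=O$. For the \emph{opposite}, given $r_{BC}(X)$ I would seek $Y\in\ell^{OI}$ with $r_{BC}(X)+r_{BC}(Y)=r_{BC}(C)$; the same factorization reduces this to $(X-C)+(Y-C)=O$, i.e.\ $Y=C-(X-C)$, which is a well-defined point of $\ell^{OI}$. Thus the opposite of $r_{BC}(X)$ in $\mathcal{R}_3$ is $r_{BC}\bigl(C-(X-C)\bigr)$.

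There is no serious obstacle: every axiom is pulled back from $(\ell^{OI},+)$ via the left-distributive factorization, exactly as in Theorem~\ref{thm.01}. The only minor care needed is to write the opposite element purely in the symbolic form $r_{BC}(\cdot)$ so that it is visibly a member of $\mathcal{R}_3$ rather than just a point of $\ell^{OI}$; once that is done the four axioms are completed and the Abelian group structure is established.
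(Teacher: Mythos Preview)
Your proposal is correct and follows essentially the same route as the paper: factor each sum through left-distributivity by $(B-C)^{-1}$, then transfer associativity and commutativity from $(\ell^{OI},+)$, identify the zero as $r_{BC}(C)$, and solve for the opposite. The only cosmetic difference is that the paper writes the opposite as $r_{BC}(2C-X)$ where you write $r_{BC}\bigl(C-(X-C)\bigr)$, which is the same point.
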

\proof
\textbf{1.} (Associativity) $\forall r_{BC}(X),r_{BC}(Y),r_{BC}(Z) \in \mathcal{R}_3$, we have three points $ X,Y,Z \in \ell^{OI}$, and, we have that:
\[
\begin{aligned}
\left[r_{BC}(X)+r_{BC}(Y)\right]+r_{BC}(Z)&=[(B-C)^{-1}(X-C)+(B-C)^{-1}(Y-C)]+(B-C)^{-1}(Z-C) \\
&= (B-C)^{-1}\{[(X-C)+(Y-C)]+(Z-C) \} \\
&\text{(from asociatiove properties of addition groups $(\ell^{OI},+)$)}\\
&= (B-C)^{-1}\{(X-C)+[(Y-C)+(Z-C)] \} \\
&=(B-C)^{-1}(X-C)+(B-C)^{-1}[(Y-C)+(Z-C)]   \\
&=(B-C)^{-1}(X-C)+[(B-C)^{-1}(Y-C)+(B-C)^{-1}(Z-C)]   \\
&=r_{BC}(X)+[r_{BC}(Y)+r_{BC}(Z)],
\end{aligned}
\]
so,
\[
[r_{BC}(X)+r_{BC}(Y)]+r_{BC}(Z)=r_{BC}(X)+[r_{BC}(Y)+r_{BC}(Z)], \forall X,Y,Z \in \ell^{OI}.
\]
\textbf{2.} (Commuativity) $\forall X,Y \in \ell^{OI}$, we have that:
\[
\begin{aligned}
r_{BC}(X)+r_{BC}(Y)&=(B-C)^{-1}(X-C)+(B-C)^{-1}(Y-C)\\
&= (B-C)^{-1}[(X-C)+(Y-C)] \\
&\text{(from commutative property of addition groups $(\ell^{OI},+)$)}\\
&= (B-C)^{-1}[(Y-C)+(X-C)] \\
&=(B-C)^{-1}(Y-C)+(B-C)^{-1}(X-C) \\
&=r_{BC}(Y)+r_{BC}(X),
\end{aligned}
\]
so
\[  r_{BC}(X)+r_{BC}(Y)=r_{BC}(Y)+r_{BC}(X),  \forall X,Y \in \ell^{OI}\]

\textbf{3.} ('zero') $\exists Z \in \ell^{OI}, \forall X \in \ell^{OI}$, we have that:
\[
\begin{aligned}
r_{BC}(X)+r_{BC}(Z)&=r_{BC}(X)\\
(B-C)^{-1}(X-C)+(B-C)^{-1}(Z-C) &=(B-C)^{-1}(X-C)\\
(B-C)^{-1}[(X-C)+(Z-C)] &=B^{-1}(X-C)
\end{aligned}
\]
since $B\neq C$ then, $B-C \neq O$ then exists $(B-C)^{-1}$ and is different from point $O$. So we have
\[ 
\begin{aligned}
(B-C)^{-1}[(X-C)+(Z-C)] &=B^{-1}(X-C) \\
 (B-C)^{-1}[(X-C)+(Z-C)]-B^{-1}(X-C) &=O 
\end{aligned}
\]
so,
\[ (B-C)^{-1}[(X-C)+(Z-C)-(X-C)]=O \Leftrightarrow (B-C)^{-1}(Z-C)=O, \]
we also know that these are points of skew-fields $(\ell^{OI},+,\cdot )$, and skew fields do not have 'divisors of zero', therefore,
\[(B-C)^{-1}(Z-C)=O \Leftrightarrow Z-C=O\Leftrightarrow Z=C.
\]
Hence the 'zero' element is $r_{BC}(C)$.

\textbf{4.} ('opposite') $\forall X \in \ell^{OI}, \exists \bar{X} \in \ell^{OI}$, we have that:
\[
\begin{aligned}
r_{BC}(X)+r_{BC}(\bar{X})&=r_{BC}(C)\\
(B-C)^{-1}(X-C)+(B-C)^{-1}(\bar{X}-C)&=(B-C)^{-1}(C-C)\\
(B-C)^{-1}[(X-C)+(\bar{X}-C)]&=O\\
(B-C)^{-1}[X-C+\bar{X}-C]&=O \\
(B-C)^{-1}[X+\bar{X}-2C]&=O \\
\end{aligned}
\]
so
\[X+\bar{X}-2C=O \Rightarrow \bar{X}=2C-X
\]

Hence the 'opposite' element of $r_{BC}(X)$ is $r_{B}(2C-X)$, $\forall X \in \ell^{OI}$.

\qed


\begin{theorem} \label{thm.12}
Let's have tow different points $B,C$ and different for point $O$, in  $\ell^{OI}-$line. The ratio-maps-set $\{r_{BC}(X)|\forall X\in \ell^{OI} \}$ forms a group with 'multiplication of points in $\ell^{OI}-$line.
\end{theorem}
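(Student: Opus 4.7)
The plan is to mimic the structure of Theorems \ref{thm.02} and \ref{thm.11}, verifying the three multiplicative group axioms for the set $\mathcal{R}_3$, implicitly with the zero element $r_{BC}(C) = O$ removed (since $O$ has no multiplicative inverse in the skew field $K = (\ell^{OI}, +, \cdot)$). The core idea is to push all computations through the definition $r_{BC}(X) = (B-C)^{-1}(X-C)$ and lean on associativity of multiplication in $K$, while being careful with non-commutativity.

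First I would verify associativity: for any $X, Y, Z \in \ell^{OI}$, the product $[r_{BC}(X) \cdot r_{BC}(Y)] \cdot r_{BC}(Z)$ expands as
\[
\bigl[(B-C)^{-1}(X-C) \cdot (B-C)^{-1}(Y-C)\bigr] \cdot (B-C)^{-1}(Z-C),
\]
and regrouping matches $r_{BC}(X) \cdot [r_{BC}(Y) \cdot r_{BC}(Z)]$ by associativity of multiplication in the skew field $K$. This step is entirely routine.

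Next, for the identity I would search for $E \in \ell^{OI}$ such that $r_{BC}(X) \cdot r_{BC}(E) = r_{BC}(X)$ for every $X$. Substituting and left-cancelling $(B-C)^{-1}(X-C)$ (valid for $X \neq C$) forces $(B-C)^{-1}(E-C) = I$, hence $E - C = B - C$, i.e., $E = B$; the identity is then $r_{BC}(B) = (B-C)^{-1}(B-C) = I$, and a symmetric computation confirms two-sided identity. For inverses, given $r_{BC}(X) \neq O$, i.e., $X \neq C$, I would solve $r_{BC}(X) \cdot r_{BC}(\bar{X}) = r_{BC}(B)$ by unwinding to
\[
(X-C)(B-C)^{-1}(\bar{X}-C) = B - C,
\]
which yields $\bar{X} = (B-C)(X-C)^{-1}(B-C) + C$. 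Since $X - C \neq O$ admits an inverse in $K$ and the line $\ell^{OI}$ is closed under addition, multiplication, and inverses of non-zero points, $\bar{X} \in \ell^{OI}$ and so $r_{BC}(\bar{X}) \in \mathcal{R}_3$ is the desired two-sided inverse.

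The main obstacle I anticipate is purely bookkeeping: because $K$ is non-commutative, one cannot rearrange factors freely, and the compact expression $\bar{X} = (B-C)(X-C)^{-1}(B-C) + C$ emerges only if left and right multiplications are tracked correctly. A secondary subtlety is being explicit that the multiplicative group is $\mathcal{R}_3 \setminus \{r_{BC}(C)\}$ rather than all of $\mathcal{R}_3$, in parallel with Theorem~\ref{thm.02}; the verification that the excluded point is precisely $O$ follows from $r_{BC}(C) = (B-C)^{-1}(C-C) = O$.
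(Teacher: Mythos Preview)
Your proposal is correct and follows essentially the same approach as the paper's proof: both verify associativity directly from associativity in the skew field $K=(\ell^{OI},+,\cdot)$, solve $r_{BC}(X)\cdot r_{BC}(E)=r_{BC}(X)$ to find the identity $E=B$, and solve $r_{BC}(X)\cdot r_{BC}(\bar{X})=r_{BC}(B)$ to obtain $\bar{X}=(B-C)(X-C)^{-1}(B-C)+C$ (the paper additionally observes that this inverse can be written compactly as $r_{XC}(B)$). Your remarks on two-sidedness and on excluding $r_{BC}(C)=O$ are extra care that the paper itself omits.
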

\proof
\textbf{1.} (Associativity) $\forall X,Y,Z \in \ell^{OI}$, we have that:
\[
\begin{aligned}
\left[r_{BC}(X) \cdot r_{BC}(Y)\right]\cdot r_{BC}(Z)&=[(B-C)^{-1}(X-C) \cdot (B-C)^{-1}(Y-C)]\cdot (B-C)^{-1}(Z-C) \\
&\text{all the factors are points of the line $\ell^{OI}$, }\\
&\text{they are also elements of skew fields, therefore from the} \\
&\text{association property, we can move the brackets}\\
&=(B-C)^{-1}(X-C) \cdot [(B-C)^{-1}(Y-C)\cdot (B-C)^{-1}(Z-C) ]\\
&=r_{BC}(X) \cdot [r_{BC}(Y) \cdot r_{BC}(Z)]
\end{aligned}
\]

\textbf{2.} ('unitary') $\exists E \in \ell^{OI}, \forall X \in \ell^{OI}$, we have that:
\[
\begin{aligned}
r_{BC}(X)\cdot r_{BC}(E)&=r_{BC}(X)\\
(B-C)^{-1}(X-C) \cdot (B-C)^{-1}(E-C)&=(B-C)^{-1}(X-C)\\
(X-C) \cdot (B-C)^{-1}(E-C)&=(X-C) \\
(X-C)^{-1}(X-C) \cdot (B-C)^{-1}(E-C)&=(X-C)^{-1}(X-C) \\
(B-C)^{-1}(E-C)&=I \\
(E-C)&=I \cdot (B-C) \\
E-C&=B-C \\
E&=B
\end{aligned}
\]
so unitary-element element is $r_{BC}(B)$.

\textbf{3.} ('inverse') $\forall X \in \ell^{OI}, \exists Y \in \ell^{OI}$, we have that:
\[
\begin{aligned}
r_{BC}(X)\cdot r_{BC}(Y)&=r_{BC}(B)\\
(B-C)^{-1}(X-C) \cdot (B-C)^{-1}(Y-C)&=(B-C)^{-1}(B-C)\\
 (X-C) \cdot (B-C)^{-1}(Y-C)&=B-C\\
(B-C)^{-1}(Y-C)&=(X-C)^{-1}\cdot (B-C)\\
r_{BC}(Y)&=r_{XC}(B)\\
&\text{or, distinguishing $Y$, we have}\\
Y-C&=(B-C)\cdot (X-C)^{-1}\cdot (B-C)\\
Y&=(B-C)\cdot (X-C)^{-1}\cdot (B-C) +C
\end{aligned}
\]
Hence the 'inverse' element of $r_{BC}(X)$ is $r_{XC}(B)$, $\forall X \in \ell^{OI}$.
\qed

\begin{theorem} \label{thm.13}
In the ratio-maps-set $\{r_{BC}(X)|\forall X\in \ell^{OI} \}$, for three elements $r_{BC}(X)$, $r_{BC}(Y)$, $r_{BC}(Z)$ of its, have true, that equations
\begin{enumerate}
	\item $r_{BC}(X)\cdot [r_{BC}(Y)+r_{BC}(Z)]=r_{BC}(X)\cdot r_{BC}(Y) +r_{BC}(X)\cdot r_{BC}(Z)$, and,
	\item $[r_{BC}(X)+r_{BC}(Y)]r_{BC}(Z)=r_{BC}(X)\cdot r_{BC}(Z) +r_{BC}(Y)\cdot r_{BC}(Z)$
\end{enumerate}
\end{theorem}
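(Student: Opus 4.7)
The plan is to reduce both identities to the distributive laws already available in the ambient skew field $K=(\ell^{OI},+,\cdot)$, mirroring the argument of Theorem \ref{thm.03}. The crucial observation is that every element of $\mathcal{R}_3$ is by construction a point $r_{BC}(W)=(B-C)^{-1}(W-C)$ lying on $\ell^{OI}$, and the symbols $+$ and $\cdot$ appearing in the statement are the same addition and multiplication of collinear points that make $\ell^{OI}$ into a skew field. Consequently, the two claimed equalities are really identities between elements of $K$, to be verified by unfolding the definition of $r_{BC}$ and then invoking one of the two distributive axioms of $K$.

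For (1), I would first expand
\[
r_{BC}(Y)+r_{BC}(Z)=(B-C)^{-1}(Y-C)+(B-C)^{-1}(Z-C),
\]
then left-multiply by $r_{BC}(X)=(B-C)^{-1}(X-C)$, and apply the left-distributive law of $K$ to distribute the product over the sum. The two summands produced are, by direct inspection, exactly $r_{BC}(X)\cdot r_{BC}(Y)$ and $r_{BC}(X)\cdot r_{BC}(Z)$, which yields the first identity. For (2) the argument is completely symmetric: I would expand $r_{BC}(X)+r_{BC}(Y)$, right-multiply by $r_{BC}(Z)$, and apply the right-distributive law of $K$, recognising the resulting summands as $r_{BC}(X)\cdot r_{BC}(Z)$ and $r_{BC}(Y)\cdot r_{BC}(Z)$.

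Because $K$ is only a skew field and not a field, the main point of care is to use the correct one-sided distributive law in each of the two parts and to preserve the order of every product throughout the manipulation; in particular, the factor $(B-C)^{-1}(X-C)$ must always remain to the left in part (1) and $(B-C)^{-1}(Z-C)$ must always remain to the right in part (2). Beyond this bookkeeping no other skew field property is invoked (in particular, no use of commutativity), so the real content of the argument is simply the inheritance of distributivity from $K$ to the subset $\mathcal{R}_3\subseteq\ell^{OI}$, completing the verification required to combine with Theorems \ref{thm.11} and \ref{thm.12} and conclude that $(\mathcal{R}_3,+,\cdot)$ is a sub-skew field of $(\ell^{OI},+,\cdot)$.
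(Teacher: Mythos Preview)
Your proposal is correct and follows essentially the same route as the paper: unfold the definition of $r_{BC}$, recognise that the elements lie in the skew field $K=(\ell^{OI},+,\cdot)$, and apply the left-distributive law for part (1) and the right-distributive law for part (2). Your explicit remark about keeping the order of factors and using the appropriate one-sided distributive law in each part is exactly the care the paper takes as well.
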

\proof Indeed 
\[
\begin{aligned}
r_{BC}(X) \cdot [r_{BC}(Y)+r_{BC}(Z)]&=(B-C)^{-1}(X-C) [ (B-C)^{-1}(Y-C) + (B-C)^{-1}(Z-C)] \\
&\text{all the factors are points of the line $\ell^{OI}$, }\\
&\text{they are also elements of skew fields,}\\
&\text{therefore from the distribution property, have} \\
&=(B-C)^{-1}(X-C) \cdot (B-C)^{-1}(Y-C) \\
&+ (B-C)^{-1}(X-C) \cdot (B-C)^{-1}(Z-C) \\
&=r_{BC}(X) \cdot r_{BC}(Y) + r_{BC}(X)\cdot r_{BC}(Z).
\end{aligned}
\]
Hence,  $ r_{BC}(X)\cdot [r_{BC}(Y)+r_{BC}(Z)]=r_{BC}(X)\cdot r_{BC}(Y) +r_{BC}(X)\cdot r_{BC}(Z).$

The other point (2) is proved in the same way, so have
\[
\begin{aligned}
\left[r_{BC}(X) +r_{BC}(Y)\right]\cdot r_{BC}(Z)]&=[(B-C)^{-1}(X-C) + (B-C)^{-1}(Y-C)] \cdot (B-C)^{-1}(Z-C)] \\
&\text{all the factors are points of the line $\ell^{OI}$, }\\
&\text{they are also elements of skew fields,}\\
&\text{therefore from the distribution property, have} \\
&=(B-C)^{-1}(X-C) \cdot (B-C)^{-1}(Z-C) \\
&+ (B-C)^{-1}(Y-C) \cdot (B-C)^{-1}(Z-C) \\
&=r_{BC}(X) \cdot r_{BC}(Z) + r_{BC}(Y)\cdot r_{BC}(Z).
\end{aligned}
\]
Hence,  $r_{BC}(X) +r_{BC}(Y)]\cdot r_{BC}(Z)]=r_{BC}(X) \cdot r_{BC}(Z) + r_{BC}(Y)\cdot r_{BC}(Z).$

\qed

From the above three theorems \ref{thm.11}, \ref{thm.12} and \ref{thm.13}, have true, this, 
\begin{theorem}
The ratio-maps-set $\mathcal{R}_3=\{r_{BC}(X)|\forall X\in \ell^{OI} \}$, for a different fixed points $B,C$ in $\ell^{OI}-$line, forms a skew-field with 'addition and multiplication' of points in $\ell^{OI}-$line.

This, skew field $(\mathcal{R}_3, +, \cdot)$ is sub-skew field of the skew field $(\ell^{OI}, +, \cdot)$.
\end{theorem}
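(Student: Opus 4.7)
The plan is to combine the three preceding theorems into the skew-field assertion, then promote it to a sub-skew-field assertion by checking the standard sub-structure criteria against the ambient $(\ell^{OI},+,\cdot)$.

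First I would invoke Theorem \ref{thm.11} to conclude that $(\mathcal{R}_3,+)$ is an abelian group, noting that the zero element is $r_{BC}(C)=(B-C)^{-1}(C-C)=O$ and the additive inverse of $r_{BC}(X)$ is $r_{BC}(2C-X)$. Then Theorem \ref{thm.12} yields that the nonzero part $\mathcal{R}_3\setminus\{O\}$ is a group under $\cdot$, with multiplicative identity $r_{BC}(B)=(B-C)^{-1}(B-C)=I$ and inverse of $r_{BC}(X)$ equal to $r_{XC}(B)$. Theorem \ref{thm.13} then supplies the two-sided distributivity of $\cdot$ over $+$. These three facts together are precisely the axioms defining a skew field, so $(\mathcal{R}_3,+,\cdot)$ is a skew field.

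For the sub-skew-field assertion, I would verify the standard sub-structure conditions against $(\ell^{OI},+,\cdot)$: (i) $\mathcal{R}_3\subseteq\ell^{OI}$, immediate from the very definition of $r_{BC}$ as a map into $\ell^{OI}$; (ii) closure under $+$ and $\cdot$, which is already visible from the computations in Theorems \ref{thm.11}, \ref{thm.12}, \ref{thm.13} (for instance $r_{BC}(X)+r_{BC}(Y)=(B-C)^{-1}[(X-C)+(Y-C)]=r_{BC}(X+Y-C)$, and similarly for products); (iii) the ambient identities $O$ and $I$ lie in $\mathcal{R}_3$, exhibited respectively as $r_{BC}(C)$ and $r_{BC}(B)$; (iv) additive and multiplicative inverses taken in $\mathcal{R}_3$ coincide with those taken in $\ell^{OI}$, since the operations on $\mathcal{R}_3$ are inherited directly from $\ell^{OI}$ rather than being defined afresh.

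I do not anticipate any serious obstacle; the substantive work was done in Theorems \ref{thm.11}, \ref{thm.12} and \ref{thm.13}, and what remains is bookkeeping that parallels the wrap-up already used for $\mathcal{R}_2$. One observation worth flagging at the end is that, because the preceding bijection theorem for $r_{BC}$ gives surjectivity onto $\ell^{OI}$, the set $\mathcal{R}_3$ in fact coincides with $\ell^{OI}$, so the ``sub-skew field'' is not proper but equals the ambient skew field $(\ell^{OI},+,\cdot)$; this subtlety is worth acknowledging but does not affect the validity of the statement as written.
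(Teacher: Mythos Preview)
Your proposal is correct and follows essentially the same approach as the paper, which simply states that the theorem follows from Theorems \ref{thm.11}, \ref{thm.12} and \ref{thm.13} without further elaboration. Your additional verification of the sub-skew-field criteria and your closing observation that surjectivity of $r_{BC}$ forces $\mathcal{R}_3=\ell^{OI}$ (so the sub-skew field is not proper) go beyond what the paper records, but they are correct and do not deviate from the paper's line of argument.
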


\section{Free Group Presentation of Affine Plane Cycles}
This section briefly introduces free groups resulting from cycles that share colinear ratio points in simply-connected polygons in the Desargues affine plane. Polygons are {\bf simply connected}, provided the polygon vertices are in a sequence of edges with no self-intersections.  

Free groups were introduced by W. Dyck in 1882 as a natural biproduct of simply connected polygons $P$ in which every vertex $a\in P$ can be reached by traversing the edges between $a$ and a distinguished vertex $g\in P$~\cite{Dyck1882freiGruppe}.

\begin{definition}\label{def:DyckPolygon}{\rm \bf Dyck Polygon}.\\
A {\bf Dyck polygon} is a simply-connected polygon in the Desargues affine plane.
\qquad \textcolor{black}{\Squaresteel}
\end{definition}

\begin{example}\label{ex:3Triangles}
A simply-connected polygon that consists of three overlapping triangles with colinear ratio points $A,B,C$ in common is shown in Fig.~\ref{fig:3cycles}.  This polygon contains cycles $\cyc E_1, \cyc E_2, \cyc E_3$:
\begin{compactenum}[1$^o$]
\item $\cyc E_1 = A\to A-C\to B_3\to A$.
\item $\cyc E_2 = B\to B-C\to B_4\to B$.
\item $\cyc E_3 = C\to O\to B_1\to B$.\qquad \textcolor{black}{\Squaresteel}
\end{compactenum}
\end{example}
\begin{figure}[!ht]
\centering
\includegraphics[width=0.90\textwidth]{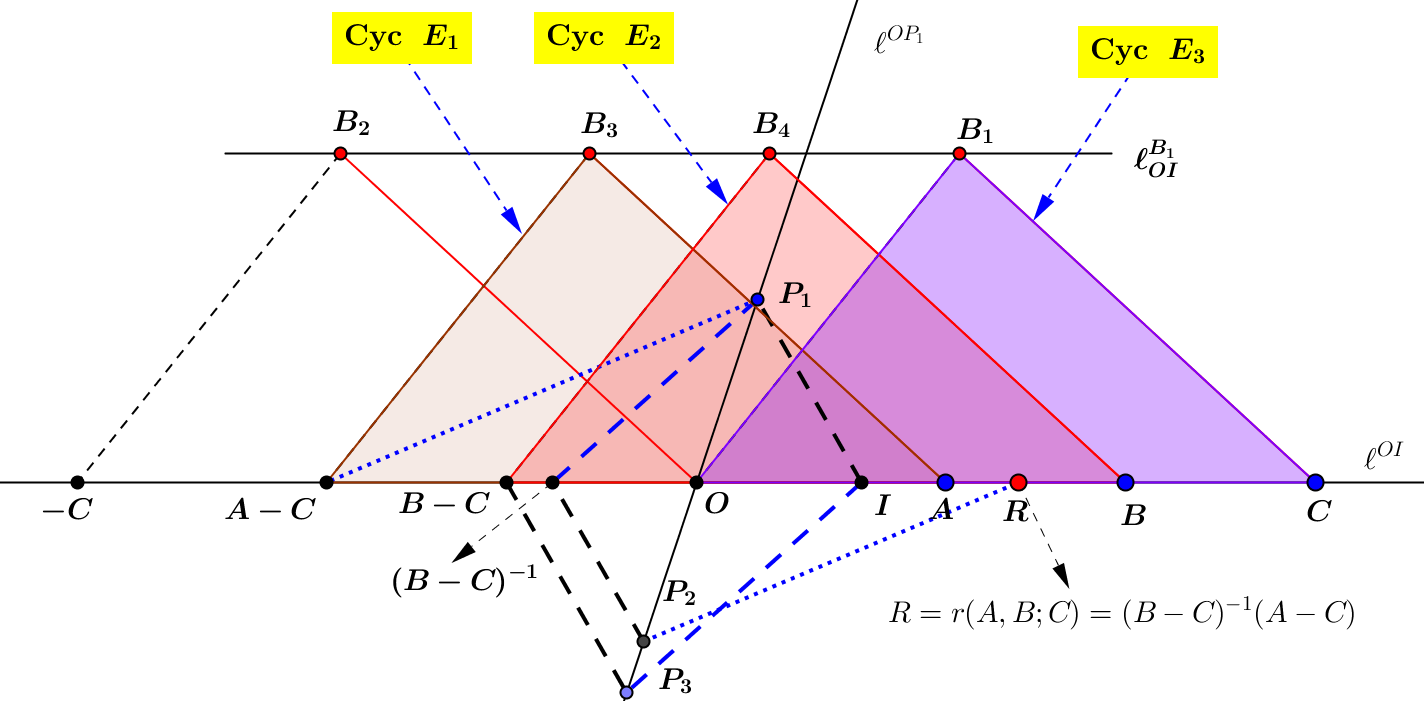}
\caption[]{Polygon Cycles $\left\{\mathop{\bigcup}\limits_{1\leq i \leq 3}\mbox{\bf cyc}E_i\right\}\to\ \mbox{\bf free group}\ G(\beta,+)$}
\label{fig:3cycles}
\end{figure}

Notice that it is possible to start at any one of the three ratio points $A,B,C$ and traverse a sequence of edges to reach any other vertex in the polygon, {\em e.g.}, if we start at $A$, we have
\begin{align*}
A &\to B \to B_4.\ \mbox{or},\\
A &\to B \to C\to B_1.\ \mbox{or},\\
A &\to O\to B-C.\ \mbox{or},\\
A &\to O\to B-C\to A-C.
\end{align*}
From Example~\ref{ex:3Triangles}, we see that the ratio points $A,B,C$ in the polygon (3 overlapping triangles) serve as distinguished vertices (called generators) so that any other vertex in the polygon can be reached from a generator vertex by traversing a sequence of edges. 
\begin{definition}
A {\em free group} $G(\beta,+)$ is nonempty finite set $G$ of $n$ elements equipped with binary operation (typically written as a $+$) and a basis $\beta\subset G$ so that every member $a\in G$ can be written as a linear combination the generators $g\in \beta$, {\em i.e.}, $a =\mathop{\sum}\limits_{g\in\beta} kg, k\ \mbox{mod}\ n.$
\qquad \textcolor{black}{\Squaresteel}
\end{definition}
\begin{lemma}\label{lemma:DyckPolygon}
Every vertex in a Dyck polygon containing colinear ratio points can be reached by a finite sequence of moves from a ratio point.
\end{lemma}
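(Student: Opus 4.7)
The plan is to prove the lemma by a direct connectedness argument, exploiting the definition of a Dyck polygon (Definition~\ref{def:DyckPolygon}) as a simply-connected polygon and the observation illustrated in Example~\ref{ex:3Triangles}, where starting from a ratio point one may traverse edges to arrive at any chosen vertex. Formally, the reachability claim reduces to the statement that the 1-skeleton (vertices plus polygon edges) of a Dyck polygon is a connected finite graph in which the ratio points are distinguished vertices.

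First I would fix a colinear ratio point $g\in\{A,B,C\}$ of the Dyck polygon $P$ as the ``starting'' vertex, and let $V(P)$ denote the (finite) vertex set of $P$ together with the edge-adjacency relation $\sim$ induced by the polygon's boundary-plus-interior edges. Because $P$ is simply connected (no self-intersections), the boundary of $P$ together with the diagonals used to assemble $P$ from its constituent triangular faces (cf.\ the cycles $\cyc E_1, \cyc E_2, \cyc E_3$ in Example~\ref{ex:3Triangles}) yields a connected 1-skeleton in which every face contains at least one ratio point.

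Next I would introduce the set
\[
R_g = \{v\in V(P)\ |\ v\ \mbox{is reachable from}\ g\ \mbox{by a finite sequence of edge-moves}\},
\]
and argue $R_g=V(P)$. Clearly $g\in R_g$. If there were a vertex $v\in V(P)\setminus R_g$, then simple-connectedness of $P$ guarantees an edge-path $g=v_0\sim v_1\sim\cdots\sim v_k=v$ in the 1-skeleton; but then an easy induction on $i$ shows $v_i\in R_g$ for every $i\le k$, contradicting $v\notin R_g$. Hence $R_g=V(P)$, and since $V(P)$ is finite, each reaching sequence is finite. When $v$ itself is not on the same face as $g$, one may route through an intermediate ratio point in $\{A,B,C\}$ that the two faces share, which is exactly the situation displayed by the four sample traversals from $A$ given after Example~\ref{ex:3Triangles}.

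The main obstacle will be making precise the intuitive notion of ``edge-move'' in the purely axiomatic Desargues-plane setting used throughout this paper, since Definition~\ref{def:DyckPolygon} is stated at a geometric rather than graph-theoretic level. Once one grants that the boundary of a simply-connected polygon induces an adjacency relation on its vertex set (as is implicitly done by the listed cycles $\cyc E_i$ in Example~\ref{ex:3Triangles}), the remaining argument is the standard connectedness induction sketched above, and the lemma then sets the stage for expressing the Dyck polygon as a free group $G(\beta,+)$ with basis $\beta=\{A,B,C\}$.
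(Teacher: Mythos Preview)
Your argument is correct and follows essentially the same idea as the paper: both derive reachability directly from the simple-connectedness of the Dyck polygon guaranteed by Definition~\ref{def:DyckPolygon}. The paper's proof is in fact a single sentence---it just says that since $P$ is simply connected, for vertices $g,v\in P$ there is a finite sequence of $k$ moves from $g$ to $v$, written $v=kg$---whereas you supply the graph-theoretic scaffolding (the set $R_g$, the induction along an edge-path) that makes this inference explicit, and you correctly flag that the notion of ``edge-move'' is left informal in the paper.
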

\begin{proof}
Let $P$ be a Dyck polgon.  From Def.~\ref{def:DyckPolygon}, $P$ is simply-connected.  Hence, if vertices $g,v\in P$, then there is a finite sequence of $k$ moves to reach $v$ from $g$, denoted by $v = kg$. 
\end{proof}
\begin{theorem}\label{theorem:DyckSum}
Every vertex in a Dyck polygon containing colinear ratio points is a linear combination of the ratio points.
\end{theorem}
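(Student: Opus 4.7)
The plan is to derive the theorem directly from Lemma~\ref{lemma:DyckPolygon} combined with the free group structure on $G(\beta,+)$ whose basis $\beta$ consists of the colinear ratio points of the Dyck polygon.

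First I would fix a Dyck polygon $P$, designate its colinear ratio points as the basis $\beta = \{g_1,\dots,g_m\}$ (for instance $\beta=\{A,B,C\}$ in Example~\ref{ex:3Triangles}), and let $n$ denote the order of $G(\beta,+)$. For an arbitrary vertex $v\in P$, I would invoke Lemma~\ref{lemma:DyckPolygon} to obtain some generator $g_i\in\beta$ and a non-negative integer $k_i$ such that $v$ is reached from $g_i$ by a sequence of $k_i$ edge-moves, giving the identity $v = k_i g_i$ in the move-count notation of the lemma.

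Next I would recast this single-generator identity into the linear-combination form demanded by the free group definition. Setting $k_j = 0$ for every $j\neq i$ immediately yields
\[
v \;=\; k_i g_i \;=\; \sum_{g_j\in \beta} k_j\, g_j \pmod{n},
\]
which is the required basis-expansion. If a genuine mixing of several generators is preferred, I would exploit the simple-connectedness of $P$ (Def.~\ref{def:DyckPolygon}) to split a chosen path from $g_i$ to $v$ at an intermediate ratio point $g_\ell\in\beta$, re-expressing $v$ as $k'_i g_i + k'_\ell g_\ell$, and iterate this splitting to involve each generator as desired.

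The step I expect to be the main obstacle is verifying that these move-count coefficients are well-defined modulo $n$ across the different admissible edge-paths from each generator to $v$. I plan to address this by appealing to the cycle structure inherent to every Dyck polygon---illustrated by the cycles $\cyc E_1,\cyc E_2,\cyc E_3$ in Example~\ref{ex:3Triangles}---since any closed loop traversed during a move sequence contributes an integer multiple of its cycle-length to the count, and reducing modulo $n$ absorbs these loop contributions and yields a well-defined linear combination of ratio points, completing the argument.
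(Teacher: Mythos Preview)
Your core approach---invoke Lemma~\ref{lemma:DyckPolygon} to write a vertex as a move-count multiple of a ratio point and then record this as a sum over the basis---is exactly what the paper does. The paper's proof differs only cosmetically: instead of writing $v=k_ig_i$ and padding the remaining coefficients with zeros, it builds a short cascading chain $B=k_1A$, $C=k_2B+k_1A$, $v=k_3C+k_2B+k_1A$ and stops there.

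Two cautions. First, your opening line appeals to ``the free group structure on $G(\beta,+)$'', but in the paper the operation $+$ is only defined in Definition~\ref{def+}, which itself cites Theorem~\ref{theorem:DyckSum}; so leaning on that structure here would be circular. Fortunately your actual argument does not need it---Lemma~\ref{lemma:DyckPolygon} alone carries the proof, just as in the paper. Second, the ``main obstacle'' you anticipate (well-definedness of the coefficients modulo $n$ across different paths) is not an obstacle at all: the theorem asserts only the \emph{existence} of some linear combination, not uniqueness, so any single path and its move-count already suffices. The paper accordingly ignores this issue entirely, and you may safely drop that paragraph.
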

\begin{proof}
Let $P$ be a Dyck polgon containing colinear ratio points $A,B,C$ and let vertex $v\in P$.  Let $k_1,k_2,k_3\in \mathbb{Z}$. From Lemma~\ref{lemma:DyckPolygon}, we have
\begin{align*}
B &= k_1A.\\
C &= k_2B + k_1A.\\
v &= k_3C + k_2B + k_1A\\
  &= \sum_{\substack{k_i\in \mathbb{Z}\\ 
	g_j\in P}} k_i g_j.
\end{align*}
\end{proof}

\begin{definition}\label{def+}{\rm {\bf Free group move operation +}}.\\
A Dyck free group denoted by $G(\beta,+)$ is defined in terms of a seqeunce of moves from one vertex to another one in a polygon $G$ and basis $\beta$ containing collinear ratio points.  From Theorem~\ref{theorem:DyckSum}, every vertex in $G$ is a linear combination of the elements of the basis $\beta$.
For a Dyck free group $G(\beta,+)$, the + operation is defined by a continuous map + such that
\begin{align*}
\mathbb{Z} &= \mbox{integer}.\\
X &= \mbox{set of vertices in a polygon}.\\
v,v' &\in X.\\
G &= \left\{g\in X\right\}.\\
\beta &= \left\{g\in P\right\}.\\
g &\in \beta.\\
+:\mathbb{Z}\times X\times \mathbb{Z}\times X&\to \mathbb{Z}\times X,\ \mbox{defined by}\\
+(x,x') &= x + x'\\ 
        &= \sum_{\substack{k_i\in \mathbb{Z}\\ 
				g_j\in \beta}} 
			  k_i g_j\ \mbox{from Theorem~\ref{theorem:DyckSum}}.\\
+(v,0v') &= \overbrace{kg+0g' = kg + 0 = v\in X.}^{\mbox{\textcolor{black}{\bf zero moves from $\boldsymbol{v}$$\in X$}}}\\
+(v,-v) &= \overbrace{kg-kg = (k-k)g = 0g = g+0 = v\in X.}^{\mbox{\textcolor{black}{\bf move back to $\boldsymbol{v}$ from $\boldsymbol{v}$}}}	
\mbox{\qquad \textcolor{black!20}{\Squaresteel}}
\end{align*}
\end{definition}

\begin{lemma}\label{lemma:freeGroupoid}
Let $G(\beta,+)$ be a Dyck polygon $G$ with basis $\beta$ and equipped with move +.  The structure $G(\beta,+)$ is a free groupoid.
\end{lemma}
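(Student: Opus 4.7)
My plan is to unpack the two requirements separately: first show that $G(\beta,+)$ satisfies the groupoid axioms (a set with a partial binary operation in which every element has a two-sided inverse, and in which composition is associative wherever it is defined), and then show that the presentation is \emph{free} in the sense that no relations beyond the groupoid axioms are imposed on the generators $g\in\beta$. For the groupoid part, I would take the objects of $G(\beta,+)$ to be the vertices of the polygon $G$, and the morphisms to be finite sequences of moves between vertices as specified in Def.~\ref{def+}; the composition $+$ concatenates such sequences.

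For the individual axioms I would proceed as follows. First, identities: the clause $+(v,0v') = kg+0g' = v$ in Def.~\ref{def+} provides, for each vertex $v$, a zero-move morphism $0_v$ that acts as a left and right identity at $v$. Second, inverses: the clause $+(v,-v) = kg-kg = 0g = v$ shows that every move $kg$ has a formal opposite $-kg$ whose composition with it returns to the starting vertex, so each morphism admits a two-sided inverse. Third, associativity: since a composite move is, by Theorem~\ref{theorem:DyckSum}, just an integer linear combination $\sum k_i g_j$ of generators from $\beta$, and addition of such linear combinations in $\mathbb{Z}$ is associative, the composition of consecutive move-sequences inherits associativity from $(\mathbb{Z},+)$.

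For freeness, I would invoke Lemma~\ref{lemma:DyckPolygon} together with Theorem~\ref{theorem:DyckSum}: every vertex $v\in G$ is expressible as $v = \sum_{g_j\in\beta} k_i g_j$, so $\beta$ generates $G$ under $+$. To see that $\beta$ generates \emph{freely}, I would argue that distinct reduced words in the generators $g\in\beta$ produce distinct sequences of moves in the simply-connected polygon $G$: because $G$ has no self-intersections (the defining feature of a Dyck polygon from Def.~\ref{def:DyckPolygon}), the only way a word in the generators can collapse to a trivial move is through a cancellation $kg-kg$, which is exactly the inverse axiom. No nontrivial reduced word represents the identity, which is precisely the freeness condition.

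The main obstacle will be the freeness step rather than the groupoid axioms. The axiomatic clauses in Def.~\ref{def+} are tailored to give identity and inverse almost by inspection, and associativity is inherited from $(\mathbb{Z},+)$; but pinning down that no \emph{extra} relations sneak in requires a careful use of the simple-connectedness hypothesis in Def.~\ref{def:DyckPolygon}, since that is what rules out accidental coincidences between different edge-traversals. Once this is established, the conclusion that $G(\beta,+)$ is a free groupoid follows by assembling the groupoid axioms with the absence of nontrivial relators on $\beta$.
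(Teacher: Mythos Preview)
Your proposal rests on a different reading of the word ``groupoid'' than the paper intends. You have taken \emph{groupoid} in the category-theoretic (Brandt) sense --- a small category in which every morphism is invertible --- and accordingly you check objects, identity morphisms, inverses, and associativity. But the paper is using \emph{groupoid} in the older algebraic sense of a \emph{magma}: simply a set equipped with a binary operation, with no further axioms. This is clear from the architecture of the section: the present lemma establishes only closure under $+$; associativity is then added in Lemma~\ref{lemma:DyckSemigroup} to obtain a semigroup; identity and inverse are supplied separately in Lemmas~\ref{lemma:identity} and~\ref{lemma:inverse}; and only then is the free-group conclusion drawn in Theorem~\ref{theorem:freeDyckGroup}. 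Under your reading, all of those subsequent lemmas would already be subsumed here, which is not how the paper is organized.

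Consequently, the paper's proof is much shorter than yours: it invokes Theorem~\ref{theorem:DyckSum} to say that $G$ has a basis $\beta\subset G$, and then observes from Def.~\ref{def+} that $+$ is a genuine binary operation on $G$, i.e.\ $+(v,v')=kg+k'g\in G$ for $v,v'\in G$. That is the entire argument. The adjective ``free'' here is used loosely to mean ``possessing a basis,'' not in the stricter sense of ``no relations among reduced words'' that you attempt to verify via simple-connectedness. Your associativity, identity, inverse, and no-nontrivial-relator arguments are therefore not needed at this stage; if you wish to retain them, they belong with the later lemmas rather than here.
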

\begin{proof}
From Theorem~\ref{theorem:DyckSum}, $G$ has a basis $\beta\subset G$. Let $g\in \beta$.
From Def.~\ref{def+}, + is a binary operation, {\em i.e.}, every $+(v,v') = kg + k'g\in G$ for $v,v'\in G$.  Hence, $G(\beta,+)$ is a free groupoid.
\end{proof}

\begin{lemma}\label{lemma:DyckSemigroup}
Let $G(\beta,+)$ be a free groupoid.
Then $G(\beta,+)$ is a free semigroup.
\end{lemma}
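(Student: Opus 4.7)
The plan is to upgrade the free groupoid $G(\beta,+)$ already established in Lemma~\ref{lemma:freeGroupoid} to a free semigroup by supplying the one missing ingredient, namely associativity of the move-operation $+$; closure of $+$ on $G$ and the generating basis $\beta$ of colinear ratio points are already in hand from Lemma~\ref{lemma:freeGroupoid} and Theorem~\ref{theorem:DyckSum} respectively.

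First, given any three vertices $v_1,v_2,v_3\in G$, I would apply Theorem~\ref{theorem:DyckSum} to write each as a $\mathbb{Z}$-linear combination of the ratio-point generators,
\[
v_i \;=\; \sum_{g_j\in\beta} k_{ij}\,g_j,\qquad k_{ij}\in\mathbb{Z},
\]
and then unpack Def.~\ref{def+}, which defines $+$ by coefficient-wise addition of such representations (the $+(v,0v')$ and $+(v,-v)$ clauses confirm that the coefficients behave exactly as in $\mathbb{Z}$). Both $(v_1+v_2)+v_3$ and $v_1+(v_2+v_3)$ then reduce to the single expression
\[
\sum_{g_j\in\beta}\bigl((k_{1j}+k_{2j})+k_{3j}\bigr)g_j \;=\; \sum_{g_j\in\beta}\bigl(k_{1j}+(k_{2j}+k_{3j})\bigr)g_j,
\]
whose equality is precisely the associativity of integer addition applied coefficient by coefficient. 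Combining associativity with the closure of $+$ yields a semigroup, and the fact that the basis $\beta$ of colinear ratio points generates every element of $G$ (again by Theorem~\ref{theorem:DyckSum}) promotes it to a \emph{free} semigroup.

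The main obstacle I anticipate is not the associativity calculation itself (which is essentially inherited from $\mathbb{Z}$) but the conceptual step of viewing each polygon vertex simultaneously as a geometric point in the Desargues affine plane and as a formal $\mathbb{Z}$-linear combination of ratio-point generators. One has to justify that the geometric ``sequence-of-moves'' semantics of $+$ given in Def.~\ref{def+} really does agree with the algebraic coefficient-wise addition of those linear combinations, so that the transfer of associativity from $\mathbb{Z}$ is legitimate rather than just suggestive. Once this identification is pinned down through Theorem~\ref{theorem:DyckSum} (and the explicit boundary clauses $+(v,0v')=v$ and $+(v,-v)=v$ in Def.~\ref{def+}), the rest of the argument is a one-line transfer from the associativity of integer addition, and the conclusion that $G(\beta,+)$ is a free semigroup is immediate.
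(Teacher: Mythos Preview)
Your proposal is correct and follows essentially the same approach as the paper: invoke Lemma~\ref{lemma:freeGroupoid} for closure and the basis, then verify associativity of $+$ by expressing vertices via Def.~\ref{def+} as integer-weighted generators and reducing to the associativity of integer addition. The only cosmetic difference is that the paper works with a single generator $g$ (writing $v=kg$, $v'=k'g$, $v''=k''g$) rather than the full multi-generator linear combination you use, but the argument is otherwise identical.
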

\begin{proof}
From Lemma~\ref{lemma:freeGroupoid}, the structure $G(\beta,+)$ is a free groupoid on a Dyck polygon $G$ with basis $\beta$ and operation $+$.  Let $+(v,v') = kg + k'g$ for $v,v'\in G, g\in \beta$.  Then
\begin{align*}
v=kg,v'=k'g,v''=kk''g&\ \mbox{for}\ v,v',v''\in G,g\in\beta\\
+((v,v'),v'') &= (kg + k'g) + k''g\\
 &= kg + (k'g + k''g)\\
 &= (v,(v',v'')).
\end{align*}
Consequently, the operation + is associative.  Hence, $G(\beta,+)$ is a Dyck free semigroup.
\end{proof}

\begin{lemma}\label{lemma:identity} Every Dyck free semigroup has an identity element.
\end{lemma}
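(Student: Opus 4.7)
The plan is to exhibit an explicit candidate identity, namely $e = 0g$ for any $g\in\beta$ (interpreted as ``zero moves'' from a generator), and then verify both the right and left identity laws using Definition~\ref{def+} together with Theorem~\ref{theorem:DyckSum} and Lemma~\ref{lemma:DyckSemigroup}.

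First, I would invoke Lemma~\ref{lemma:DyckSemigroup} to record that $G(\beta,+)$ is already a free semigroup, so associativity of $+$ is in place and the only outstanding requirement is the existence of a two-sided unit $e\in G$ satisfying $e+v = v+e = v$ for every $v\in G$. By Theorem~\ref{theorem:DyckSum}, every vertex $v\in G$ admits a representation $v = \sum_{g_j\in\beta} k_j g_j$ with $k_j\in\mathbb{Z}$, which reduces the identity check to working with such linear combinations.

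Second, I would take $e = 0g$ for a fixed $g\in\beta$. The ``zero moves'' clause $+(v,0v') = kg + 0g' = kg + 0 = v$ in Definition~\ref{def+} directly yields the right identity law $v + e = v$ for every single-generator vertex $v = kg$, and applying this clause summand-by-summand to the decomposition supplied by Theorem~\ref{theorem:DyckSum} extends the identity to all of $G$. For the left identity $e + v = v$, I would use the fact that the coefficients of the generators live in $\mathbb{Z}$ and that integer addition is commutative, so $0g + kg = (0+k)g = kg$, and again this propagates summand-by-summand to arbitrary $v\in G$.

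The step that warrants the most care — and the main obstacle — is ensuring that $e = 0g$ is genuinely an element of $G$ rather than an auxiliary artifact of the notation. Because $G$ is generated by the basis $\beta$ under $+$ via linear combinations with integer coefficients, the zero combination is tautologically admitted, so $e\in G$ and it does not depend on which $g\in\beta$ is chosen (any two such zeros agree as the additive neutral element). Combined with the associativity provided by Lemma~\ref{lemma:DyckSemigroup}, this upgrades $G(\beta,+)$ to a monoid, which is precisely the content of the lemma.
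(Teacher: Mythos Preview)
Your proposal is correct and follows essentially the same approach as the paper: both take $e=0g$ as the identity and verify it via the ``zero moves'' clause of Definition~\ref{def+}, with the paper condensing everything to the single line $kg+0g=(k+0)g=v$. Your version is simply more detailed, explicitly checking both the left and right identity laws and the membership $e\in G$, whereas the paper leaves these implicit.
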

\begin{proof}
Immediate from Def.~\ref{def+}, since we can always write
\[
kg + 0g = (k+0)g = v\in G,
\]
for every vertex $v\in G$.
\end{proof} 

\begin{notation}{\rm {\bf Psudo-Identity element}}.\\
The psudo-identity element 0 in a Dyck free semigroup stems from integer 0, since we can always write
\[
0g + kg = 0 + kg = kg = v\in G, 0,k\in \mathbb{Z}.
\]
We also write
\[
0g = 0 + 1g = v\in G.
\]
The notation $0 + 1g$ reads {\bf traversal from} $v$ with zero length.
\qquad \textcolor{black!20}{\Squaresteel}
\end{notation}

\begin{lemma}\label{lemma:inverse} Every member of a Dyck free semigroup has an inverse.
\end{lemma}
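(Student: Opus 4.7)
The plan is to exhibit, for each vertex $v\in G$, an explicit candidate $-v\in G$ and then verify that $+(v,-v)$ reduces to the pseudo-identity introduced in the notation after Lemma~\ref{lemma:identity}. First I would fix an arbitrary $v\in G$ and invoke Theorem~\ref{theorem:DyckSum} to present $v$ as a $\mathbb{Z}$-linear combination of the colinear ratio generators, $v=\sum_{g_j\in\beta}k_j g_j$ with $k_j\in\mathbb{Z}$. The inverse is the element obtained by negating every coefficient, $-v:=\sum_{g_j\in\beta}(-k_j)g_j$, which is well-defined in $G$ since $\mathbb{Z}$ is closed under negation and $\beta$ is still the operative basis.

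Next I would carry out the core computation using the move operation $+$ from Def.~\ref{def+}, grouping the sum coefficient-by-coefficient via the associativity already established in Lemma~\ref{lemma:DyckSemigroup}. This collapses $+(v,-v)$ to $\sum(k_j-k_j)g_j=\sum 0\,g_j$, and the clause $+(v,-v)=kg-kg=(k-k)g=0g$ inside Def.~\ref{def+} then identifies the result with the pseudo-identity $0$, that is, the zero-length traversal that fixes $v$. An identical calculation, with the roles of $v$ and $-v$ swapped, yields $+(-v,v)=0$, so the candidate is a two-sided inverse in the sense delivered by Lemma~\ref{lemma:identity}.

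The main obstacle is interpretive rather than computational: Def.~\ref{def+} writes $+(v,-v)=v\in X$ instead of the customary $v+(-v)=0$, which at first glance appears to collapse the inverse relation onto $v$ itself. I would defuse this by reading the equality through the psudo-identity convention stated just before this lemma, under which $0g$ and $0+1g$ both denote a zero-length traversal and therefore act as a neutral move that leaves any vertex fixed. Under that reading, $+(v,-v)=0g$ legitimately represents the identity, and the candidate $-v$ genuinely inverts $v$. This is precisely the final ingredient needed to upgrade the free semigroup of Lemma~\ref{lemma:DyckSemigroup} (with identity supplied by Lemma~\ref{lemma:identity}) into the Dyck free group $G(\beta,+)$ anticipated in Def.~\ref{def+}.
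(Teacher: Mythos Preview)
Your proposal is correct and follows essentially the same approach as the paper: the paper's proof is the one-line appeal to Def.~\ref{def+}, writing $kg-kg=(k-k)g=0g=v\in G$, which is exactly your core computation specialized to a single generator. Your version is more careful (working with the full combination $\sum k_j g_j$, checking two-sidedness, and explicitly confronting the odd ``$+(v,-v)=v$'' convention), but the underlying idea is identical.
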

\begin{proof}
Immediate from Def.~\ref{def+}, since we can always write
\[
kg - kg = (k-k)g = 0v = v\in G
\]
for every vertex $v\in G$.  
\end{proof}

\begin{theorem}\label{theorem:freeDyckGroup}
$G(\beta,+)$ is a free Dyck group.
\end{theorem}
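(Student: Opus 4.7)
The plan is to assemble the four lemmas immediately preceding the theorem into a verification of the group axioms, and then to invoke Theorem~\ref{theorem:DyckSum} to upgrade the resulting group to a free group on the basis $\beta$ of colinear ratio points. First I would cite Lemma~\ref{lemma:DyckSemigroup} to obtain both closure of $+$ on $G$ and the associativity law. Next I would invoke Lemma~\ref{lemma:identity} to single out the pseudo-identity $0g$ as a two-sided identity for $+$, and Lemma~\ref{lemma:inverse} to produce, for each vertex $v=kg\in G$, an inverse $-v = -kg$ satisfying $v + (-v) = 0g$. Taken together, these four facts say that $(G,+)$ is a group.

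To promote this group to a free group, I would appeal to Theorem~\ref{theorem:DyckSum}: every vertex in the Dyck polygon $G$ can be written as a finite linear combination $\sum k_i g_j$ with $k_i\in\mathbb{Z}$ and $g_j$ ranging over the basis $\beta$ of colinear ratio points $A,B,C$. Since $\beta$ generates $G$ under $+$ and, by Def.~\ref{def+}, the only relations present are those forced by the move operation (zero-length traversals are trivial and a traversal cancels its reverse), no extra relations are imposed. Hence $G(\beta,+)$ satisfies the free-group definition given earlier in the section, with $\beta$ as basis.

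The step I expect to be the main obstacle is verifying that the pseudo-identity $0g$ is genuinely independent of the choice of generator $g\in\beta$, so that a single two-sided identity serves the whole group; a parallel worry is the two-sidedness of the inverse $-v$. Both should fall out of the defining equations $+(v,0v') = v$ and $+(v,-v) = v$ in Def.~\ref{def+} applied symmetrically, combined with the associativity inherited from Lemma~\ref{lemma:DyckSemigroup}, but I would want these checks written out explicitly rather than left implicit in the lemma citations, since the move-operation formalism does not automatically guarantee that $0g = 0g'$ for distinct generators $g,g'\in\beta$.
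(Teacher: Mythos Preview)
Your proposal is essentially the same as the paper's own proof: the paper simply cites Lemma~\ref{lemma:DyckSemigroup} for the free-semigroup structure, Lemma~\ref{lemma:identity} for the identity $0v=v$, and Lemma~\ref{lemma:inverse} for inverses, and concludes immediately. Your extra appeal to Theorem~\ref{theorem:DyckSum} and your worries about the well-definedness of the pseudo-identity and two-sidedness of inverses go beyond what the paper actually checks, but the overall strategy matches.
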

\begin{proof}
From Lemma~\ref{lemma:DyckSemigroup}, the structure $G(\beta,+)$ is a free semigroup with basis $\beta$.  From Lemma~\ref{lemma:identity}, $G$ has an identity element, namely, $0v = v$.  From Lemma~\ref{lemma:inverse}, every member $v\in G$ has inverse, namely, $kg - kg = (k-k)g = 0g = v$.  Hence,
$G(\beta,+)$ is a free Dyck group.
\end{proof}

\begin{theorem}\label{theorem:DyckPolgonPresentation}
Every Dyck polygon containing colinear ratio vertices has a free group presentation.
\end{theorem}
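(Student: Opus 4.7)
The plan is to harvest the machinery that has already been assembled in the excerpt and simply assemble it into a presentation statement. A presentation of a group typically specifies a set of generators together with the relations that they satisfy, so I would first isolate, inside the Dyck polygon $P$, the collinear ratio vertices $A,B,C\in \ell^{OI}$ and declare the basis $\beta=\{A,B,C\}\subset P$. The goal then becomes to verify that the pair $(\beta,+)$ with the move operation of Def.~\ref{def+} genuinely presents the group $G(\beta,+)$ built on all vertices of $P$, where every vertex is treated as a word in the generators and the only relations that need to be imposed are the trivial ones forced by the group axioms.

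The execution would proceed in three short steps. First, invoke Lemma~\ref{lemma:DyckPolygon} to guarantee reachability of every vertex $v\in P$ from a generator $g\in\beta$ through a finite sequence of edge-moves, so that $v=kg$ for some $k\in\mathbb{Z}$; this shows that $\beta$ generates the whole vertex set under $+$. Second, apply Theorem~\ref{theorem:DyckSum} to rewrite each such $v$ as a linear combination
\[
v=\sum_{\substack{k_i\in\mathbb{Z}\\ g_j\in\beta}} k_i g_j,
\]
which gives the canonical word form for members of $G$ over the alphabet $\beta$. Third, invoke Theorem~\ref{theorem:freeDyckGroup} to assert that $G(\beta,+)$ is already a free Dyck group: associativity, identity and inverses all come for free from Lemmas~\ref{lemma:DyckSemigroup},~\ref{lemma:identity} and~\ref{lemma:inverse}, respectively. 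Because $G(\beta,+)$ is free, no nontrivial relations between the generators $A,B,C$ are imposed beyond those dictated by the group axioms themselves, so the presentation takes the clean form $\langle A,B,C \mid \varnothing\rangle$ with the traversal operator $+$ interpreting concatenation.

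The one genuine obstacle, rather than a computation, is conceptual: I would need to make sure that the "no nontrivial relations" claim is honest, i.e.\ that two different edge-walks from a vertex $v$ to a vertex $v'$ in the polygon do not secretly impose an identification in $G$ that would turn the presentation into a non-free one. This is precisely where the simply-connected hypothesis in Def.~\ref{def:DyckPolygon} earns its keep: since $P$ has no self-intersections and the generators $A,B,C$ are collinear on $\ell^{OI}$, every two traversals between the same pair of vertices differ only by combinations of the basic move-and-reverse relations $kg+(-k)g=0g$ that Def.~\ref{def+} already declares trivial. Once this observation is stated, the theorem follows by glue-together of the preceding lemmas and Theorem~\ref{theorem:freeDyckGroup}, and the presentation $\langle\beta\mid\varnothing\rangle$ is exhibited.
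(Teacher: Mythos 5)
Your proposal is correct and follows essentially the same route as the paper: the paper's proof is the one-liner ``Immediate from Theorem~\ref{theorem:freeDyckGroup},'' and your three steps simply unpack the dependency chain (Lemma~\ref{lemma:DyckPolygon} $\to$ Theorem~\ref{theorem:DyckSum} $\to$ Theorem~\ref{theorem:freeDyckGroup}) that the paper leaves implicit. Your added remark that simple-connectedness is what prevents hidden relations among the generators is a useful explicitation of a point the paper does not spell out, but it does not change the underlying argument.
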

\begin{proof}
Immediate from Theorem~\ref{theorem:freeDyckGroup}.
\end{proof}

From a Dyck free group presentation of a Desargues affine polygon containing colinear, we obtain a concise means of identify the properties of colinear ratio vertices, {\em e.g.},
\begin{compactenum}[{\bf Property-}1$^o$]
\item {\bf Generator} Colinear ratio vertices are generators in a Dyck free group.
\item {\bf Traversal} In a Dyck free group on a Dyck polygon, there is a traversal from any ratio vertex to any other vertex in the polygon.
\item {\bf Path Measure}.  Every vertex in a Dyck polygon with colinear ratio vertices can be written concisely as a linear combination of the ratio vertices.  Hence, path has length $k_1+\cdots+k_n$ for $v = \sum_{\substack{k_i\in \mathbb{Z}\\ 
				g_j\in \beta}} 
			  k_i g_j'$.
\item {\bf Concise Presentation} From Theorem~\ref{theorem:DyckPolgonPresentation}, every Dyck polygon with colinear ratio points has a free group presentation.  
\item {\bf Path}  From an application perspective, Theorem~\ref{theorem:DyckPolgonPresentation} is significant, since we can then view a discrete Feynman path~\cite[p. xiv]{Feynman1942thesis} (a trace of a trajectory of a particle) having a geometric realization as a sequence of edges in a  Dyck polygon containing colinear ratio points.
\end{compactenum}
%
%
%
\bibliographystyle{amsplain}
\bibliography{RCRrefs}

\providecommand{\bysame}{\leavevmode\hbox to3em{\hrulefill}\thinspace}
\providecommand{\MR}{\relax\ifhmode\unskip\space\fi MR }
\providecommand{\MRhref}[2]{%
  \href{http://www.ams.org/mathscinet-getitem?mr=#1}{#2}
}
\providecommand{\href}[2]{#2}
\begin{thebibliography}{10}

\bibitem{Artin1957GeometricAlgebra}
Emil Artin, \emph{Geometric algebra}, Intersci. Tracts Pure Appl. Math.,
  vol.~3, Interscience Publishers, New York, NY, 1957 (English).

\bibitem{Berger2010geometryRevealed}
M.~Berger, \emph{Geometry revealed}, Springer, Heidelberg, 2010, {x}vi+831 pp.,
  ISBN: 978-3-540-70996-1, MR2724440.

\bibitem{Berger2009geometry12}
Marcel Berger, \emph{Geometry. {I}, {II}. {Transl}. from the {French} by {M}.
  {Cole} and {S}. {Levy}}, corrected 4th printing ed., Universitext, Berlin:
  Springer, 2009 (English).

\bibitem{CoxterIG1969}
H.~S.~M. Coxeter, \emph{Introduction to geometry, 2nd ed.}, John Wiley \& Sons,
  Inc., New York-London-Sydney, 1969, {x}vii+469 pp., MR0123930, MR0346644.

\bibitem{Dyck1882freiGruppe}
W.~Dycke, \emph{Gruppentheoretischer studien}, Mathematische Annalen
  \textbf{20} (1882), 1--144.

\bibitem{Feynman1942thesis}
R.P. Feynman, \emph{The principle of least action in quantum mechanics}, Ph.D.
  thesis, Princeton University, Princeton, N.J., 1942, supervisor: John
  Wheeler.

\bibitem{FilipiZakaJusufi}
K.~Filipi, O.~Zaka, and A.~Jusufi, \emph{The construction of a corp in the set
  of points in a line of desargues affine plane}, Matematicki Bilten
  \textbf{43} (2019), no.~01, 1--23, ISSN 0351-336X (print), ISSN 1857--9914
  (online).

\bibitem{Hartshorne1967Foundations}
R.~Hartshorne, \emph{Foundations of projective geometry}, New {York}: {W}.{A}.
  {Benjamin}, {Inc}. 1967. {VII}, 167 p. (1967)., 1967.

\bibitem{Herstein1968NR}
I.N. Herstein, \emph{Topics in algebra, 2nd ed.}, Xerox College Publishing,
  Lexington, Mass., 1975, xi+388 pp., MR0356988; first edition in 1964,
  MR0171801 (detailed review).

\bibitem{Hilbert1959geometry}
D.~Hilbert, \emph{The foundations of geometry}, The Open Court Publishing Co.,
  La Salle, Ill., 1959, {v}ii+143 pp., MR0116216.

\bibitem{HugesPiper}
D.R. Hughes and F.C. Piper, \emph{Projective planes, graduate texts in
  mathematics, vol. 6}, Spnnger-Verlag, Berlin, New York, 1973, {x}+291 pp.,
  MR0333959.

\bibitem{Kryftis2015thesis}
A.~Kryftis, \emph{A constructive approach to affine and projective planes},
  Ph.D. thesis, University of Cambridge, Trinity College and Department of Pure
  Mathematics and Mathematical Statistics, 2015, supervisor: M. Hyland,
  v+170pp.,arXiv 1601.04998v1 19 Jan. 2016.

\bibitem{Luneburg1967}
H.~L{\"u}neburg, \emph{An axiomatic treatment of ratios in an affine plane},
  Arch. Math. \textbf{18} (1967), 444--448 (English).

\bibitem{Pickert1973PlayfairAxiom}
G.~Pickert, \emph{Affine planes: {A}n example of research on geometric
  structures}, The Mathematical Gazette \textbf{57} (2004), no.~402, 278--291,
  MR0474017.

\bibitem{Prazmowska2004DemoMathDesparguesAxiom}
M.~Pra\.{z}mowska, \emph{A proof of the projective {D}esargues axiom in the
  {D}esarguesian affine plane}, Demonstratio Mathematica \textbf{37} (2004),
  no.~4, 921--924, MR2103894.

\bibitem{Szmielew1981DesarguesAxiom}
W.~Szmielew, \emph{Od geometrii afinicznej do euklidesowej (polish) [from
  affine geometry to euclidean geometry] rozwa?ania nad aksjomatyk? [an
  approach through axiomatics]}, Biblioteka Matematyczna [Mathematics Library],
  Warsaw, 1981, 172 pp., ISBN: 83-01-01374-5, MR0664205.

\bibitem{ZakaThesisPhd}
O.~Zaka, \emph{Contribution to reports of some algebraic structures with affine
  plane geometry and applications}, Ph.D. thesis, Polytechnic University of
  Tirana,Tirana, Albania, Department of Mathematical Engineering, 2016,
  supervisor: K. Filipi, vii+113pp.

\bibitem{ZakaCollineations}
\bysame, \emph{A description of collineations-groups of an affine plane},
  Libertas Mathematica (N.S.) \textbf{37} (2017), no.~2, 81--96, ISSN print:
  0278 -- 5307, ISSN online: 2182 -- 567X, MR3828328.

\bibitem{ZakaVertex}
\bysame, \emph{Three vertex and parallelograms in the affine plane: Similarity
  and addition abelian groups of similarly $n$-vertexes in the {D}esargues
  affine plane}, Mathematical Modelling and Applications \textbf{3} (2018),
  no.~1, 9--15, \url{http://doi:10.11648/j.mma.20180301.12}.

\bibitem{ZakaDilauto}
\bysame, \emph{Dilations of line in itself as the automorphism of the
  skew-field constructed over in the same line in desargues affine plane},
  Applied Mathematical Sciences \textbf{13} (2019), no.~5, 231--237.

\bibitem{ZakaFilipi2016}
O.~Zaka and K.~Filipi, \emph{The transform of a line of {D}esargues affine
  plane in an additive group of its points}, Int. J. of Current Research
  \textbf{8} (2016), no.~07, 34983--34990.

\bibitem{ZakaMohammedEndo}
Orgest Zaka and Mohanad~A. Mohammed, \emph{The endomorphisms algebra of
  translations group and associative unitary ring of trace-preserving
  endomorphisms in affine plane}, Proyecciones \textbf{39} (2020), no.~4,
  821--834 (English).

\bibitem{ZakaMohammedSF}
\bysame, \emph{Skew-field of trace-preserving endomorphisms, of translation
  group in affine plane}, Proyecciones \textbf{39} (2020), no.~4, 835--850
  (English).

\bibitem{ZakaPetersIso}
Orgest Zaka and James~F. Peters, \emph{Isomorphic-dilations of the skew-fields
  constructed over parallel lines in the {Desargues} affine plane}, Balkan J.
  Geom. Appl. \textbf{25} (2020), no.~1, 141--157 (English).

\bibitem{ZakaPetersOrder}
Orgest Zaka and James~Francis Peters, \emph{Ordered line and skew-fields in the
  {Desargues} affine plane}, Balkan J. Geom. Appl. \textbf{26} (2021), no.~1,
  141--156 (English).

\end{thebibliography}

\end{document}